\newcommand{\myrightleftarrows}[1]{\mathrel{\substack{\xrightarrow{#1} \\[-.9ex] \xleftarrow{#1}}}}
\newtheorem{theorem}{Theorem}[section]
\newtheorem{corollary}[theorem]{Corollary}
\newtheorem{lemma}[theorem]{Lemma}
\newtheorem{proposition}[theorem]{Proposition}
\theoremstyle{definition}
\newtheorem*{definition}{Definition}
\numberwithin{equation}{section}
\def \bC {\mathbb C}
\def \bD {\mathbb D}
\def \bN {\mathbb N}
\def \bR {\mathbb R}
\def \bR {\mathbb R}
\def \bR {\mathbb R}
\def \bT {\mathbb T}
\def \bZ {\mathbb Z}
\def \cA {\mathcal A}
\def \cD {\mathcal D}
\def \cE {\mathcal E}
\def \cG {\mathcal G}
\def \cH {\mathcal H}
\def \cK {\mathcal K}
\def \cL {\mathcal L}
\def \cP {\mathcal P}
\def \cS {\mathcal S}
\def \fg {\mathfrak g}
\def \fk {\mathfrak k}
\def \ft {\mathfrak t}
\def \fU {\mathfrak U}
\def \al {\alpha}
\def \la {\lambda}
\def \ph {\varphi}
\def \lan {\langle}
\def \ran {\rangle}
\def \de {\partial}
\def \inv{^{-1}}
\def \tr {\text{\rm tr\,}}
\def\be{\begin{equation}}
\def\ee{\end{equation}}
\def\bes{\begin{equation*}}
\def\ees{\end{equation*}}
\def\bea{\begin{equation}\begin{aligned}}
\def\eea{\end{aligned}\end{equation}}
\def\beas{\begin{equation*}\begin{aligned}}
\def\eeas{\end{aligned}\end{equation*}}
\title[]{On the Schwartz correspondence\\ for Gelfand pairs of polynomial growth}
\dedicatory{In memory of Edoardo Vesentini,\\ who has been at the origin of our interest in Gelfand theory}
\author{Francesca Astengo}
\address{Dipartimento di Matematica, Universit\`a di Genova, Via Dodecaneso 35, 16146 Genova, Italy} 
\email{{\tt astengo@dima.unige.it}}
\author{Bianca Di Blasio}
\address{Dipartimento di Matematica e Applicazioni, Universit\`a di Milano Bicocca, Via Cozzi 53, 20125 Milano, Italy } 
\email{{\tt bianca.diblasio@unimib.it}}
\author{Fulvio Ricci}
\address{Scuola Normale Superiore, Piazza dei Cavalieri
7, 56126 Pisa, Italy } 
\email{{\tt fulvio.ricci@sns.it}}
\subjclass[2010]{43A85, 43A90}                         
\keywords{Gelfand pairs, spherical transform, groups of polynomial growth, Schwartz space}
\begin{document}

\maketitle

\begin{abstract} 
Let $(G,K)$ be a Gelfand pair, with $G$ a Lie group of polynomial growth, and let $\Sigma\subset\bR^\ell$ be a homeomorphic image of the Gelfand spectrum, obtained by choosing a generating system $D_1,\dots,D_\ell$ of $G$-invariant differential operators on $G/K$ and associating to a bounded spherical function $\ph$ the $\ell$-tuple of its eigenvalues under the action of  the $D_j$'s. 

We say that property (S) holds for $(G,K)$ if the spherical transform maps the bi-$K$-invariant  Schwartz space $\cS(K\backslash G/K)$ isomorphically onto $\cS(\Sigma)$, the space of restrictions to $\Sigma$ of the Schwartz functions on $\bR^\ell$. This property is known to hold for many nilpotent pairs, i.e., Gelfand pairs where $G=K\ltimes N$, with $N$ nilpotent.

In this paper we enlarge the scope of this analysis outside the range of nilpotent pairs, stating the basic setting for general pairs  of polynomial growth and then focussing on strong Gelfand pairs.
\end{abstract}

\baselineskip15pt

\section{Introduction}

Within harmonic analysis on general locally compact groups, the theory of Gelfand pairs is the branch that exploits forms of commutativity in a noncommutative context.

If a group $G$ is not abelian, convolution on it is not commutative, but it may happen that it becomes commutative when restricted to functions satisfying appropriate invariance properties.

Most of the relevant examples of this kind can be reconducted to the situation where commutativity holds for $L^1$-functions that are bi-invariant with respect to a compact subgroup $K$ of $G$, i.e., 
\be\label{biK}
f(k_1xk_2)=f(x)\ ,\qquad \forall\, x\in G\ ,\ k_1,k_2\in K\ .
\ee

 In this case one says that $(G,K)$ is a {\it Gelfand pair}.

The importance of this type of condition is that bi-$K$-invariant functions represent the kernels of integral operators on the $G$-homogeneous space $M=G/K$ which commute with the action of $G$.

The theory becomes particularly rich when $G$ is assumed to be a Lie group and $M$ is connected. Then condition \eqref{biK} is equivalent to commutativity of the algebra of $G$-invariant differential operators on $M$. In this case the homogeneous space $M$ is also called a {\it commutative space}. The simplest examples of commutative spaces are spheres, Euclidean and hyperbolic spaces, with the action of their  isometry groups.

The notion of Gelfand pair appeared first in the study of (infinite dimensional) irreducible unitary representations of semisimple Lie groups. Since then it has been widely applied to analysis on semisimple groups and symmetric spaces \cite{H, War}. More recent is the interest in pairs where the group $G$ is not semisimple, with motivations that are both algebraic \cite{BR, Vin, Yak} and analytic \cite{BJR1,BJR2, DS, HulR}.

\medskip

In this paper we are only interested in Gelfand pairs $(G,K)$ with $G$ a Lie group and this condition must be assumed even if not made explicit. The group $K$ is a compact subgroup of $G$ and we will almost always require that $M=G/K$ is connected.

With a standard language abuse, we use the same notation $L^p(G/K)$, $C^\infty(G/K)$ etc., to denote both spaces of functions on the homogeneous space $M$ and   spaces of right-$K$-invariant functions on $G$. In the same way, $L^p(K\backslash G/K)$ etc. will stand both for spaces of $K$-invariant functions on $M$ and for spaces of bi-$K$-invariant functions on $G$.

Spherical analysis on a Gelfand pair $(G,K)$ is the analogue of Fourier analysis on $\bR^n$ or on the torus~$\bT$.\footnote{In fact, Fourier analysis on $\bR^n$ is spherical analysis on the pair $(\bR^n,\{0\})$ and similarly on $\bT$.}
Precisely, it comes from application of Gelfand theory to the commutative convolution algebra $L^1(K\backslash G/K)$.

Our interest is in the mapping properties of the spherical transform when $G$ has polynomial growth. More precisely we want to know if the transform establishes a correspondence between Schwartz spaces (in the sense that will be explained soon), as the classical Fourier transform does on $\bR^n$.

This problem has been investigated, always with positive answers,  on a large number of ``nilpotent pairs'' \cite{ADR1, ADR2, FiR, FRY1,FRY2} and it can be conjectured that the same should hold for general nilpotent pairs. In absence of a general approach able to cover the totality of pairs, progress on this topic goes by gradual bootstrapping from simpler cases to less simple ones.

Here we move in a different direction, opening the problem of validity of the Schwartz correspondence when $G$ has polynomial growth but the pair is not nilpotent.  In this perspective, the first case to analyze is that of {\it strong Gelfand pairs} of polynomial growth, which present a particularly neat structure. As it will be explained in the first paragraph of Section 4, the problem does not make sense in the same terms when $G$ has not polynomial growth.

 The scope of this paper is threefold. It wants to be an introduction to the problem, recalling the basic notions and properties of  Gelfand pairs of general Lie groups, emphasizing the special features of pairs with polynomial growth and of strong Gelfand pairs. It proposes a paradigm for concrete attack of the Schwartz correspondence problem for strong pairs via reduction to $K$-types.\footnote{The method presented here finds application in \cite{ADR3} to the strong Gelfand pair $(U_2\ltimes\bC^2,U_2)$}. Finally, it gives a first application of this paradigm for strong pairs with $K$ abelian.

Along the way, we give a proof of Schwartz correspondence for general compact Gelfand pairs.

\medskip

We explain now in what sense we talk of Schwartz correspondence in the context of Gelfand pairs.

The spherical transform $\cG f$ of a function $f\in L^1(K\backslash G/K)$ is defined on the Gelfand spectrum $\Sigma$, the set of bounded {\it spherical functions}, as
$$
\cG f(\ph)=\int_G f(x)\ph(x\inv)\,dx\ ,\qquad \ph\in\Sigma\ .
$$

The operator $\cG$ has several basic properties in common with the Fourier transform, in particular, it is injective and
$$
\cG(f*g)=(\cG f)(\cG g)\ ,\qquad \cG:L^1(K\backslash G/K)\longrightarrow C_0(\Sigma)\ .
$$

Despite the abstract definition of $\Sigma$ and of its topology, i.e., the weak* topology inherited from its inclusion in $L^\infty(K\backslash G/K)$, it is possible to give concrete realizations of $\Sigma$.

In fact, when realized as a function on $M$, a spherical functions $\ph$ is an eigenfunction of  all $G$-invariant differential operators. In the algebra of such operators we can then fix a finite set of generators $\cD=\{D_1,\dots,D_\ell\}$ and associate to $\ph$ the $\ell$-tuple $\xi=(\xi_1,\dots,\xi_\ell)\in\bC^\ell$ of its eigenvalues under $D_1,\dots,D_\ell$. This $\ell$-tuple identifies $\ph$ uniquely and the map $\ph\longmapsto\xi$ is a homeomorphism with closed image \cite{FR}, denoted by $\Sigma_\cD$. 

If $G$ has polynomial growth, choosing symmetric generators $D_j$, the eigenvalues are real, so that $\Sigma_\cD\subset\bR^\ell$.
Setting 
\be\label{Ssigma}
\cS(\Sigma_\cD)=\big\{g_{|_{\Sigma_\cD}}:g\in\cS(\bR^\ell)\big\} \cong \cS(\bR^\ell)/\{g:g_{|_{\Sigma_\cD}}=0\}\ ,
\ee
endowed with the quotient topology, 
the Schwartz correspondence problem concernes the validity of the following property:
\smallskip

\begin{enumerate}
\item[(S)] $\cG$ maps $\cS(K\backslash G/K)$ onto $\cS(\Sigma_\cD)$ isomorphically.
\end{enumerate}
\smallskip

\bigskip

\section{Gelfand pairs}

Let $G$ be a  Lie group and $K$ a compact subgroup. We introduce the following notation.
\begin{itemize}
\item If $X(G)$ is a space of functions on $G$, we denote by $X(K\backslash G/K)$ the subspace of bi-$K$-invariant functions. 
\item If $K'$ is a group of automorphisms of $G$, $X(G)^{K'}$ denotes the subspace of $K'$-invariant elements of $X(G)$.
\item By ${\rm Int}(K)$ we denote the group of conjugations by elements of $K$. It will be clear from the context whether conjugations are meant on $K$ itself or on $G$. An ${\rm Int}(K)$-invariant function on $G$ is called {\it $K$-central}. Obviously,
\be\label{inclusion}
X(K\backslash G/K)\subset X(G)^{{\rm Int}(K)}\ .
\ee
\item By $\bD(G)^{{\rm Int}(K)}\cong \fU(\fg)^{{\rm Ad}(K)}$ we denote the algebra of left-invariant differential operators on $G$ which are also invariant under ${\rm Int}(K)$. 
\item By $\bD(G/K)\cong  \fU(\fg)^{{\rm Ad}(K)}/\big( \fU(\fg)^{{\rm Ad}(K)}\cap\fU(\fg)\fk\big)$ we denote the algebra of $G$-invariant differential operators on $G/K$.
\end{itemize}

We refer to  \cite{Far, vanD, W} for undefined terms, unproven statements and further details.

\begin{definition}
The pair $(G,K)$ is called a {\it Gelfand pair} if either of the following equivalent conditions is satisfied:
\begin{enumerate}
\item[(i)] the convolution algebra $L^1(K\backslash G/K)$ is commutative;
\item[(ii)] if $\pi\in\widehat G$ contains the trivial representation of $K$, it does so without multiplicity.
\end{enumerate}
Assuming $G/K$ connected, the above conditions are also equivalent to
\begin{enumerate}
\item[(iii)] $\bD(G/K)$ is commutative.
\end{enumerate}

Notice that it is not necessary to specify which Haar measure is being used in (i) to define~$L^1$, because in either case commutativity of $L^1(K\backslash G/K)$ implies unimodularity of $G$.

\end{definition}

\subsection{$K$-invariant functions on a group $H$}\qquad

A large class of Gelfand pairs arise in the following context.

Let $K$ is a compact group  of automorphisms of another Lie group $H$, and set $G=K\ltimes H$, with product
\be\label{product}
(k,h)(k',h')=\big(kk',h(k\cdot h')\big)\ ,
\ee
where $(k,h)\longmapsto k\cdot h$ denotes the action of $k\in K$ on $H$.

We identify the quotient space $G/K$ with $H$, via the correspondence $h\longmapsto hK=\{(k,h):k\in K\}$.
For $f$ defined on $H$, we call $\widetilde f$ its right-$K$-invariant lifting to $G$, $\widetilde f(k,h)=f(h)$. Then $\widetilde f$ is bi-$K$-invariant if and only if $f$ is $K$-invariant. In particular, under this correspondence,
\be\label{L^1(H)}
L^1(K\backslash G/K)\cong L^1(H)^K
\ee
as convolution algebras.
In a similar way one establishes the isomorphism 
\be\label{D(H)}
\bD(G/K)\cong \bD(H)^K
\ee 
as operator algebras. This implies that the equivalences (i)-(iii) given above can be reformulated only in terms of $H$ and $K$. In order to deal with (ii), we need the following notation.

Given an irreducible unitary  representation $\pi$ of $H$, let $K_\pi$ be the stabilizer of $[\pi]\in\widehat H$ under the action $k:\pi\longmapsto \pi\circ k\inv$ of $K$. Let also $\sigma_\pi$ be the associated projective representation of $K_\pi$, i.e., such that $\pi(k\inv\cdot h)=\sigma_\pi(k)\pi(h)\sigma_\pi(k)\inv$ for all $k\in K$, $h\in H$.

\begin{proposition}\label{KltimesH}
Let $H$, $K$, $G=K\ltimes H$ be as above.
Each of the following is equivalent to the condition that $(G,K)$ is a Gelfand pair:
\begin{enumerate}[\rm(i')]
\item $L^1(H)^K$ is commutative;
\item for every $\pi\in\widehat H$, $\sigma_\pi$ decomposes without multiplicities.
\end{enumerate}

If $H$ is connected, these conditions are also equivalent to
\begin{enumerate}
\item[\rm(iii')] $\bD(H)^K$ is commutative.
\end{enumerate}
\end{proposition}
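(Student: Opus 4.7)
The plan is to derive each of (i'), (ii'), (iii') from the corresponding condition of the Definition. The equivalences (i)$\,\Leftrightarrow\,$(i') and (iii)$\,\Leftrightarrow\,$(iii') are essentially tautological, following at once from the isomorphism \eqref{L^1(H)} of convolution algebras and the isomorphism \eqref{D(H)} of operator algebras: commutativity is transported across each isomorphism. I would dispatch these in a sentence each.

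The substance lies in (ii)$\,\Leftrightarrow\,$(ii'), for which I would invoke the Mackey machine for semidirect products. For each $\pi\in\widehat H$, the stabilizer $K_\pi\le K$ carries the projective representation $\sigma_\pi$ with some cocycle $c_\pi$; each irreducible projective representation $\tau$ of $K_\pi$ with cocycle $c_\pi^{-1}$ produces an irreducible representation of $G$
$$
\rho_{\pi,\tau} \,=\, \mathrm{Ind}_{K_\pi\ltimes H}^{\,G}\bigl(\widetilde{\tau\otimes\pi}\bigr),\qquad \widetilde{\tau\otimes\pi}(k,h) \,=\, \tau(k)\otimes\sigma_\pi(k)\pi(h),
$$
in which the inducing representation is genuine because the two projective cocycles cancel. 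Up to equivalence every $\rho\in\widehat G$ is of this form, with $K$-orbits in $\widehat H$ indexing the distinct families. To compute the multiplicity of $\mathbf{1}_K$ in $\rho_{\pi,\tau}$, I would restrict along $K\hookrightarrow G$ using Mackey's subgroup theorem: transitivity of $H$ on $G/K\cong H$ leaves a single $(K_\pi\ltimes H, K)$-double coset, and since $(K_\pi\ltimes H)\cap K=K_\pi$, the restriction formula collapses to
$$
\rho_{\pi,\tau}\big|_K \,=\, \mathrm{Ind}_{K_\pi}^{K}(\tau\otimes\sigma_\pi).
$$
Frobenius reciprocity together with a projective Schur's lemma then identifies $\dim \mathrm{Hom}_K(\mathbf{1}_K,\rho_{\pi,\tau})$ with the multiplicity of $\bar\tau$ in the isotypic decomposition of $\sigma_\pi$. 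Demanding this quantity to be $\le 1$ for every pair $(\pi,\tau)$ is precisely (ii').

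The main obstacle I anticipate is making the Mackey machine rigorous in the generality required: one needs the orbit space $\widehat H/K$ to be regular (automatic when $H$ is type I, and in particular for any connected Lie $H$), one must verify that every $\rho\in\widehat G$ with a $K$-fixed vector arises from some Mackey pair $(\pi,\tau)$, and one must keep track of cocycle conventions so that $\tau\otimes\sigma_\pi$ is indeed a bona fide representation of $K_\pi$. Once these foundational points are in place, the argument reduces to the short chain sketched above: the algebra isomorphisms \eqref{L^1(H)} and \eqref{D(H)}, Mackey's subgroup theorem for the unique double coset, and Frobenius reciprocity.
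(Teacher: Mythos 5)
The paper does not prove this proposition at all: it is stated as a known fact, with the reader referred to \cite{Far, vanD, W}, after observing that \eqref{L^1(H)} and \eqref{D(H)} reduce everything to statements about $H$ and $K$. Your proposal reconstructs exactly the standard argument found in those references: (i)$\Leftrightarrow$(i') and (iii)$\Leftrightarrow$(iii') by transport of structure, and (ii)$\Leftrightarrow$(ii') by the Mackey machine, the subgroup theorem applied to the single double coset $(K_\pi\ltimes H)\backslash G/K$, and Frobenius reciprocity identifying $\dim\mathrm{Hom}_K(\mathbf 1,\rho_{\pi,\tau})$ with the multiplicity of $\bar\tau$ in $\sigma_\pi$. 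That chain is correct, so in substance you are giving the proof the paper chose to omit.

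One concrete error in your hedging paragraph: regularity of the Mackey machine is \emph{not} automatic for connected Lie groups, because not every connected Lie group is type I (the Mautner group is a connected solvable counterexample). So the claim that every $\rho\in\widehat G$ arises as some $\rho_{\pi,\tau}$ — which you need for (ii')$\Rightarrow$(ii) — genuinely requires a type I (or countably separated orbit space) hypothesis that the proposition as stated does not impose; this caveat is inherited from the literature rather than introduced by you, and it is harmless for the groups actually used later in the paper ($H=\bR^n$, $\bC^n$, $H_n$, all type I), but you should not justify it by connectedness. The converse direction (ii)$\Rightarrow$(ii') is unaffected, since the irreducibility of $\rho_{\pi,\tau}$ for compact $K$ does not need regularity.
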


\subsection{Strong Gelfand pairs and $K$-types}
\quad

The pair $(G,K)$ is called a {\it strong Gelfand pair} if  the $K$-central convolution algebra $L^1(G)^{{\rm Int}(K)}$ is commutative. By \eqref{inclusion}, a strong Gelfand pair is a Gelfand pair.

The structure of strong Gelfand pairs is made more precise by introducing the notion of $K$-type for functions on $G$.

For $\tau\in\widehat K$, a function $f\in L^1_{\rm loc}(G)$ is called {\it of $K$-type $\tau$} if the following identity holds for every $x\in G$:
\be\label{typetau}
f*_K (d_\tau \overline{\chi_\tau})(x)=d_\tau\int_Kf(xk\inv)\overline{\chi_\tau(k)}\,dk=f(x)\ ,
\ee
where $\chi_\tau$ and $d_\tau$ are character and dimension of $\tau$, respectively. Every $f\in L^1(G)$ decomposes into $K$-types,
\be\label{f->ftau}
f\sim \sum_{\tau\in\widehat K}f_\tau\ ,\qquad\big( f_\tau= f*_K(d_\tau\overline{\chi_\tau})\ \big)\ .
\ee

If $f$ is $K$-central, also its $K$-types $f_\tau$ are $K$-central, and the space $L^1_\tau(G)^{{\rm Int}(K)}$ of $K$-central functions of type $\tau$ is a convolution algebra. Moreover, the convolution of two $K$-central functions of different $K$-types is zero. In particular, for $f,g\in L^1(G)^{{\rm Int}(K)}$,
$$
f*g\sim\sum_{\tau\in\widehat K}f_\tau*g_\tau\ .
$$

Let now $V_\tau$ be the representation space of (some representative of) $\tau\in\widehat K$ and ${\rm End}(V_\tau)$ the algebra of endomorphisms of $V_\tau$.
There is  another realization of the $\tau$-type $K$-central algebra that we now describe \cite{Camp, RS, T, War}. 

For $f\in L^1_\tau(G)^{{\rm Int}(K)}$, we set
$$
A_\tau f(x)=\int_Kf(xk)\tau(k)\,dk\in L^1\big(G, {\rm End}(V_\tau)\big)\overset{\rm def}=L^1(G)\otimes {\rm End}(V_\tau)\ .
$$

Then 
\be\label{equivariance}
A_\tau f(k_1xk_2)=\tau(k_2\inv)A_\tau f(x)\tau(k_1\inv)\ ,
\ee
and we express this equivariance condition by writing $A_\tau f\in L^1_{\tau,\tau}\big(G, {\rm End}(V_\tau)\big)$.

Conversely, if $F\in  L^1_{\tau,\tau}\big(G, {\rm End}(V_\tau)\big)$, the function
$$
S_\tau F(x)=d_\tau\tr F(x)
$$
is in $L^1_\tau(G)^{{\rm Int}(K)}$ and the two maps $A_\tau$, $S_\tau$ are inverse of each other. Moreover they establish algebra isomorphisms, if convolution on $L^1_{\tau,\tau}\big(G, {\rm End}(V_\tau)\big)$ is defined as
$$
F_1*F_2(x)=\int_G F_1(y\inv x)F_2(y)\,dy\ .
$$

\begin{proposition}\label{eqstrong}
The following conditions are equivalent:
\begin{enumerate}[\rm(a)]
\item $(G,K)$ is  a strong Gelfand pair;
\item the pair  $(K\times G,{\rm diag}K)\cong\big({\rm Int}(K)\ltimes G,{\rm Int}(K)\big)$ is a Gelfand pair;
\item for every $\pi\in\widehat G$, $\pi_{|_K}$ decomposes into irriducibles without multiplicities;
\item for every $\tau\in\widehat K$, $L^1_\tau(G)^{{\rm Int}(K)}$ is commutative;
\item for every $\tau\in\widehat K$, $L^1_{\tau,\tau}\big(G, {\rm End}(V_\tau)\big)$ is commutative.
\medskip
\\
If $G$ is connected, these conditions are also equivalent to 
\smallskip

\item the algebra $\bD(G)^{{\rm Int}(K)}$ is commutative.
\end{enumerate}
\end{proposition}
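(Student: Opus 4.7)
The plan is to route the equivalences through (a)$\Leftrightarrow$(b) and then apply Proposition~\ref{KltimesH} to the semidirect product of $K$ acting on $G$ by conjugation; the remaining conditions (d) and (e) are then dealt with via the $K$-type decomposition and the intertwining maps $A_\tau,S_\tau$ already introduced in the text.

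For (a)$\Leftrightarrow$(b), I would identify $(K\times G,{\rm diag}\,K)$ with $({\rm Int}(K)\ltimes G,{\rm Int}(K))$ through the group isomorphism $\Phi:(k,g)\mapsto(k,gk\inv)$, whose homomorphism property is a direct check and which carries ${\rm diag}\,K$ onto $K\times\{e\}$. Bi-invariance under this left factor of a function on $K\ltimes G$ is then equivalent to the function being constant in the first coordinate and ${\rm Int}(K)$-invariant in the second, so \eqref{L^1(H)} applied to $H=G$ gives $L^1({\rm diag}\,K\backslash(K\times G)/{\rm diag}\,K)\cong L^1(G)^{{\rm Int}(K)}$ as convolution algebras. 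Conditions (a) and (b) thus become the same commutativity statement.

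Next I would invoke Proposition~\ref{KltimesH} with $H=G$ under the inner action of $K$. For any $\pi\in\widehat G$ the identity $\pi\circ{\rm Int}(k)\inv=\pi(k)\inv\pi(\cdot)\pi(k)$ shows that the stabilizer $K_\pi$ equals $K$ and that one may take $\sigma_\pi=\pi_{|K}$, a genuine (not merely projective) unitary representation of $K$. Condition (ii$'$) then reads ``$\pi_{|K}$ decomposes without multiplicities for every $\pi\in\widehat G$'', i.e.\ (c); and when $G$ is connected, (iii$'$) yields commutativity of $\bD(G)^K$ under the conjugation action of $K$, which is precisely (f). Hence (b)$\Leftrightarrow$(c) and, in the connected case, (b)$\Leftrightarrow$(f).

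For (a)$\Leftrightarrow$(d) I would use the $K$-type decomposition \eqref{f->ftau}: the projections $f\mapsto f_\tau$ preserve ${\rm Int}(K)$-invariance, and the vanishing $f_\sigma*g_\tau=0$ for $\sigma\neq\tau$ recalled before the proposition gives $(f*g-g*f)_\tau=f_\tau*g_\tau-g_\tau*f_\tau$ for every $\tau$ and every $f,g\in L^1(G)^{{\rm Int}(K)}$. By faithfulness of the Peter--Weyl expansion, commutativity of the whole algebra is then equivalent to commutativity of each subalgebra $L^1_\tau(G)^{{\rm Int}(K)}$, and (d)$\Leftrightarrow$(e) follows immediately from the stated algebra isomorphisms $A_\tau,S_\tau$ between $L^1_\tau(G)^{{\rm Int}(K)}$ and $L^1_{\tau,\tau}(G,{\rm End}(V_\tau))$. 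The point requiring the most care is (a)$\Leftrightarrow$(b): one has to verify that $\Phi$ is a group isomorphism sending ${\rm diag}\,K$ to the left factor and that the quotient Haar measure matches so that the identification via \eqref{L^1(H)} is one of convolution algebras with no unwanted scalar factors; once this is done, the representation-theoretic equivalences are direct consequences of Proposition~\ref{KltimesH}.
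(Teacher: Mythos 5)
Your argument is correct and essentially complete; note, however, that the paper itself offers no proof of Proposition~\ref{eqstrong} at all -- it is stated as a known result with pointers to the literature (Camporesi, Ricci--Samanta, Tirao, Warner), so there is no internal proof to compare against. What you have done is assemble a self-contained derivation from the paper's own preceding machinery, and the assembly is sound: the isomorphism $\Phi(k,g)=(k,gk\inv)$ does carry $(K\times G,\mathrm{diag}\,K)$ onto the semidirect product of $K$ acting on $G$ by conjugation with $\mathrm{diag}\,K$ going to the left factor, and bi-invariance under that factor is exactly constancy in $k$ plus $\mathrm{Int}(K)$-invariance in $g$, so \eqref{L^1(H)} gives (a)$\Leftrightarrow$(b); Proposition~\ref{KltimesH} with $H=G$ then delivers (c) and, for $G$ connected, (f); and (d), (e) follow from the orthogonality of $K$-types and the isomorphisms $A_\tau,S_\tau$ exactly as you say. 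Two small points of care. First, with the paper's convention $\pi(k\inv\cdot h)=\sigma_\pi(k)\pi(h)\sigma_\pi(k)\inv$ and $k\cdot h=khk\inv$, the intertwiner is $\sigma_\pi(k)=\pi(k)\inv$ rather than $\pi(k)$ itself; this is only a convention issue, since $\{\pi(k)\inv\}$ and $\{\pi(k)\}$ generate the same von Neumann algebra and hence have the same commutant, so ``$\sigma_\pi$ multiplicity-free'' is still equivalent to ``$\pi_{|_K}$ multiplicity-free''. Second, $\Phi$ strictly lands in $K\ltimes_{\mathrm{Int}}G$ rather than $\mathrm{Int}(K)\ltimes G$ (the two differ by the central kernel $K\cap Z(G)$ acting trivially), but bi-invariant functions only see the quotient, so nothing is affected. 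Neither point is a gap.
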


Notice that, denoting by $\tau_0$ the trivial representation of $K$, $L^1_{\tau_0}(G)^{{\rm Int}(K)}=L^1(K\backslash G/K)$. 
\medskip

\section{Spherical functions and spectra}

From now on, we only consider Gelfand pairs $(G,K)$ with $G/K$ connected and, consequently, only strong Gelfand pairs $(G,K)$ with $G$ connected.

The {\it spherical functions} on $G$ are the $K$-invariant functions on $G/K$ that are
joint eigenfunctions $\ph$ of all elements of~$\bD(G/K)$ and have $\ph(eK)=1$. We use the same term, and the same symbol $\ph$, for the lifted bi-$K$-invariant function on $G$.

The set $\Sigma$ of {\it bounded} spherical functions, endowed with the compact-open topology, represents the Gelfand spectrum of $L^1(K\backslash G/K)$, also called the spectrum of $(G,K)$. Then every nontrivial multiplicative functional on $L^1(K\backslash G/K)$ can be expressed as
\be\label{spherical}
f\longmapsto \int_Gf(x)\ph(x\inv)\,dx\overset{\rm def}=\cG f(\ph)\ .
\ee

The operator $\cG: L^1(K\backslash G/K)\longrightarrow C_0(\Sigma)$ is the  {\it spherical transform} of $(G,K)$.

Calling, for $\ph$ spherical, $\la_\ph(D)$ the eigenvalue of $\ph$ under the action of $D\in\bD(G/K)$, the algebra homomorphism $\xi_\ph:\bD(G/K)\longrightarrow\bC$ identifies $\ph$ uniquely. Since $\bD(G/K)$ is finitely generated \cite[Cor. 2.8, Thm. 5.6]{H}, we can identify spherical functions with finite tuples of eigenvalues.

\begin{theorem} [\text{\cite{FR}}]\label{FerRuf}
Let $\cD=\{D_1,\dots,D_\ell\}$ be a finite generating system in $\bD(G/K)$. 
The map
\be\label{iota}
\iota_\cD(\ph)=\big(\xi_\ph(D_1),\dots,\xi_\ph(D_\ell)\big)\ .
\ee
 establishes a homeomorphism between $\Sigma$ and its image $\iota_\cD(\Sigma)=\Sigma_\cD\subset \bC^\ell$. Moreover, $\Sigma_\cD$ is closed in $\bC^\ell$.
\end{theorem}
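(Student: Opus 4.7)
The plan is to verify that $\iota_\cD$ is a continuous bijection of $\Sigma$ onto a closed subset $\Sigma_\cD\subset\bC^\ell$ with continuous inverse. \emph{Injectivity} is immediate from the classical fact that a bounded spherical function on $G/K$ is determined by the algebra character $\xi_\ph\colon\bD(G/K)\to\bC$ it induces: as $\cD$ generates $\bD(G/K)$, the character $\xi_\ph$ is in turn determined by its values $\xi_\ph(D_j)$, so the tuple $\iota_\cD(\ph)$ recovers $\ph$. (That bounded spherical functions are in bijection with their characters can be seen by invoking an elliptic element of $\bD(G/K)$ together with the normalization $\ph(eK)=1$.)

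For \emph{continuity of $\iota_\cD$}, I would fix $\ph_0\in\Sigma$ and pick $f\in C_c^\infty(K\backslash G/K)$ with $\cG f(\ph_0)\ne 0$; such $f$ exists because $\ph_0(e)=1$ makes $\cG f(\ph_0)$ close to $\int f$ for bi-$K$-invariant bumps supported near $e$. For each generator $D_j$ one finds, via integration by parts (the formal transpose of a left-invariant operator is again left-invariant), a bi-$K$-invariant $f_j\in C_c^\infty(K\backslash G/K)$ such that
$$\xi_\ph(D_j)\,\cG f(\ph)\;=\;\cG f_j(\ph),\qquad \ph\in\Sigma.$$
Since all bounded spherical functions satisfy $|\ph|\le 1$ (substitute $y=x$ into the functional equation $\int_K\ph(xky)\,dk=\ph(x)\ph(y)$), compact-open convergence $\ph_n\to\ph_0$ gives $\cG f(\ph_n)\to\cG f(\ph_0)$ and $\cG f_j(\ph_n)\to\cG f_j(\ph_0)$ by dominated convergence, whence $\xi_{\ph_n}(D_j)\to\xi_{\ph_0}(D_j)$.

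\emph{Closedness of $\Sigma_\cD$ and continuity of $\iota_\cD^{-1}$} I would establish together by a normal-family argument. Fix an elliptic $L\in\bD(G/K)$ (for example, the Laplace--Beltrami operator of a $G$-invariant Riemannian metric on $G/K$ induced by an $\mathrm{Ad}(K)$-invariant complement of $\fk$ in $\fg$), and write $L=P(D_1,\dots,D_\ell)$ for some polynomial $P$; then $L^k\ph=P(\iota_\cD(\ph))^k\ph$ for every $k\ge 0$ and every $\ph\in\Sigma$. Given $\xi_n=\iota_\cD(\ph_n)\to\xi$ in $\bC^\ell$, the sequences $\{L^k\ph_n\}$ are uniformly bounded on compacta; elliptic regularity (iterated) then makes $\{\ph_n\}$ uniformly bounded in $C^\infty$ on every compact set, so by Arzel\`a--Ascoli and diagonalization a subsequence converges in $C^\infty$ on compacta to a bounded $K$-invariant $\ph$ with $\ph(e)=1$ and $D_j\ph=\xi_j\ph$, i.e., $\ph\in\Sigma$ and $\iota_\cD(\ph)=\xi$. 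This proves closedness; combined with injectivity, it forces the whole sequence $\ph_n$ to converge to $\ph$, giving continuity of $\iota_\cD^{-1}$. The only genuinely non-routine ingredient is this elliptic bootstrap, which couples boundedness of spherical functions, the polynomial identity $L=P(D_1,\dots,D_\ell)$, and ellipticity of $L$ to upgrade mere convergence of the eigenvalue tuple into $C^\infty$-compactness of $\{\ph_n\}$.
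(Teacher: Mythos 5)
The paper does not prove this statement: it is quoted verbatim from Ferrari Ruffino \cite{FR}, and the only ingredient the paper supplies is the sentence just before the theorem asserting that the character $\xi_\ph$ determines $\ph$. So there is no in-paper proof to compare against, and your argument has to be judged on its own terms. It holds up. Your injectivity step rests exactly on the fact the paper takes for granted (uniqueness of the bounded spherical function with a given eigenvalue character); your parenthetical justification of that fact is the thinnest point of the write-up, since uniqueness of the normalized $K$-invariant joint eigenfunction is a genuine theorem (essentially multiplicity one) and not an immediate consequence of ellipticity plus $\ph(eK)=1$, but it is classical and the paper itself invokes it without proof. The continuity step is sound: the identity $\xi_\ph(D_j)\,\cG f(\ph)=\cG(D_jf)(\ph)$ follows cleanly from $\ph*f=\cG f(\ph)\,\ph$ and left-invariance of $D_j$ (this is slightly cleaner than literal integration by parts, which lands the transposed operator on $\check\ph$, but either route works), and with $f$ compactly supported and $\cG f(\ph_0)\ne0$ the quotient argument gives continuity at $\ph_0$. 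The closedness/inverse-continuity step is the substantive one and your elliptic bootstrap is correct: $\bD(G/K)$ contains an elliptic $L$ (Laplace--Beltrami of an invariant metric), $L=P(D_1,\dots,D_\ell)$ since $\cD$ generates, $\|\ph_n\|_\infty\le1$ plus boundedness of $P(\xi_n)$ gives uniform bounds on $L^k\ph_n$, and interior elliptic estimates plus Arzel\`a--Ascoli yield a $C^\infty_{\rm loc}$ limit which, by the paper's definition of spherical function (normalized $K$-invariant joint eigenfunction), lies in $\Sigma$ with $\iota_\cD(\ph)=\xi$; the subsequence trick then upgrades this to continuity of $\iota_\cD^{-1}$ (legitimate since the compact-open topology on $C(G)$ is metrizable for a second-countable $G$). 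This is in the same spirit as \cite{FR}, which likewise couples the bound $|\ph|\le1$, the transform identity for $\cG(Df)$, and (hypo)ellipticity; your version is a perfectly serviceable reconstruction.
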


If $G=K\ltimes H$, we can appeal to \eqref{L^1(H)} and \eqref{D(H)} and regard spherical functions as $K$-invariant functions on $H$, which are eigenfunctions of the operators in $\bD(H)^K$. 
\medskip

 Something more must be added in the case of a strong Gelfand pair $(G,K)$, recalling from Proposition \ref{eqstrong} that this means that $\big({\rm Int}(K)\ltimes G,{\rm Int}(K)\big)$ is a Gelfand pair. Coherently with the previous remark, we regard  spherical functions as $K$-central functions on $G$, which are eigenfunctions of the operators in $\bD(G)^{{\rm Int}(K)}$. Each spherical function has a unique $K$-type \cite[Prop. 7.3]{RS}, and this determines a partition 
\be\label{partition}
\Sigma=\bigcup_{\tau\in\widehat K}\Sigma^\tau
\ee 
of the Gelfand spectrum according to $K$-types, where each $\Sigma^\tau$ is the Gelfand spectrum of the $K$-type subalgebra $L^1_\tau(G)^{{\rm Int}(K)}$.

Notice that, for the trivial representation $\tau_0$, the component $\Sigma^0$ is the spectrum of the underlying (non-strong) Gelfand pair $(G,K)$.

\medskip

\section{Pairs with polynomial growth}

It is not possible, in general, to find generators such that the embedding \eqref{iota} is in $\bR^\ell$. This is the case, for instance, of symmetric pairs of the noncompact type, where the bounded spherical functions, as well as their eigenvalues, depend holomorphically on complex parameters \cite[Ch. 4]{H2}.

On the contrary, embeddings in real Euclidean spaces are possible when the group $G$ has {\it polynomial (volume) growth}, i.e., if there is $k\in \bN$ such that, given a compact neighborhood  $U$ of the identity, the Haar measures  $m(U^n)$ of powers of $U$ are $O(n^k)$ as $n\to\infty$. 
We say that the pair $(G,K)$ has polynomial growth if $G$ has polynomial growth. 

 The following result extends Theorem G in \cite{BJR1} to general Gelfand pairs of polynomial growth.

\begin{lemma}\label{xireal}
Let $(G,K)$ be a Gelfand pair with polynomial growth. All bounded spherical functions are positive definite. Let $\cD=\{D_1,\dots,D_\ell\}$ be a generating system of $\bD(G/K)$, with all the $D_j$  symmetric operators. Then $\Sigma_\cD\subset \bR^\ell$. 
\end{lemma}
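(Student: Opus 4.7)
The proof splits into two parts: (a) every bounded spherical function on a Gelfand pair of polynomial growth is positive definite, and (b) positive definiteness together with symmetry of the $D_j$'s forces their joint eigenvalues to be real.

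For (a) the plan is to reduce to symmetry of a Banach $*$-algebra. By the theorem of Ludwig for nilpotent Lie groups, extended by Losert to arbitrary Lie groups of polynomial growth, $L^1(G)$ is a symmetric Banach $*$-algebra. The bi-$K$-invariant subalgebra $L^1(K\backslash G/K)$, obtained from $L^1(G)$ by the bi-$K$-averaging projection, inherits symmetry (the delicate point; see the obstacle paragraph below). In a commutative symmetric Banach $*$-algebra every multiplicative linear functional is Hermitian, so the character $\Gamma_\ph(f)=\int_G f(x)\ph(x^{-1})\,dx$ associated with a bounded spherical function $\ph$ satisfies $\Gamma_\ph(f^*)=\overline{\Gamma_\ph(f)}$, which unpacks to $\ph(x^{-1})=\overline{\ph(x)}$; combined with multiplicativity this gives $\Gamma_\ph(f^* * f)=|\Gamma_\ph(f)|^2\geq 0$. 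By Godement's theorem characterizing positive-definite spherical functions on a Gelfand pair, $\ph$ is positive definite.

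For (b), with $\ph$ positive definite we have $\overline{\ph(x)}=\ph(x^{-1})$. Lift each $D_j\in\bD(G/K)$ to $u_j\in\fU(\fg)^{{\rm Ad}(K)}$; symmetry of $D_j$ as a differential operator on $G/K$ (with respect to the $G$-invariant measure, which is available since polynomial growth forces unimodularity) is equivalent to $u_j=u_j^*$ under the principal anti-involution of $\fU(\fg)$ extending $X\mapsto -X$ on $\fg$. A direct computation, reducing by induction on degree to the case $u=X\in\fg$, shows that for any smooth $f$ with $\overline{f(x)}=f(x^{-1})$ and any $u\in\fU(\fg)$,
\[
(u^* f)(e)=\overline{(uf)(e)};
\]
the base case follows by differentiating $\overline{f(\exp(-tX))}=f(\exp tX)$ at $t=0$. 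Applied to $u=u_j=u_j^*$ and $f=\ph$, this gives
\[
\xi_\ph(D_j)=(u_j\ph)(e)=\overline{(u_j\ph)(e)}\in\bR
\]
for each $j$, so $\Sigma_\cD\subset\bR^\ell$.

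The main obstacle is the passage from symmetry of $L^1(G)$ to symmetry of the bi-$K$-invariant subalgebra, since subalgebra spectra can in principle be strictly larger than ambient spectra and symmetry is a condition on the former. This is handled either by noting that polynomial growth descends to the double-coset space $K\backslash G/K$ and applying a Hulanicki-style symmetry criterion directly to $L^1(K\backslash G/K)$, or by showing that for polynomial growth the $L^1(G)$-spectral radius of a self-adjoint bi-$K$-invariant element already coincides with its $C^*$-norm, so no new spectrum is introduced on restriction. Step (b), once (a) is in hand, is routine Lie-algebraic bookkeeping.
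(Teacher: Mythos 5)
Your part (b) is fine, and your overall plan (symmetry of $L^1$ for polynomial growth $\Rightarrow$ positive definiteness $\Rightarrow$ real eigenvalues) is the right one; the paper follows the same outline. But there is a genuine gap at the pivot of part (a), namely the sentence ``By Godement's theorem\dots, $\ph$ is positive definite.'' What you actually establish is that the character $\Gamma_\ph$ is Hermitian, hence satisfies $\Gamma_\ph(f^**f)=|\Gamma_\ph(f)|^2\ge 0$ for $f\in L^1(K\backslash G/K)$. Positive definiteness of $\ph$ as a function on $G$, however, amounts to $\int_G (g^**g)\,\check\ph\ge 0$ for all $g\in C_c(G)$; averaging reduces this to right-$K$-invariant $g$, and for such $g$ the element $g^**g$ is bi-$K$-invariant but is \emph{not} of the form $h^**h$ with $h$ bi-$K$-invariant, nor obviously in the closed cone those elements generate. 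The implication ``Hermitian character of $L^1(K\backslash G/K)$ $\Rightarrow$ $\ph$ positive definite'' is not a formal consequence of Gelfand-pair theory: on higher-rank semisimple pairs there are bounded Hermitian spherical functions whose characters are therefore positive on the bi-$K$-invariant subalgebra but which are not positive definite (the spherical unitary dual is a proper subset of the Hermitian bounded parameters). So the version of Godement's theorem you need does not exist; any correct argument must re-invoke symmetry of the \emph{full} algebra $L^1(G)$ at this point, not just of its commutative corner.

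The paper closes exactly this gap by a different mechanism: using symmetry of $L^1(G)$ and Naimark's extension theorem, the multiplicative functional $\Lambda_\ph$ is extended to an irreducible $*$-representation $\pi$ of all of $L^1(G)$, which integrates a unitary representation $\pi_0$ of $G$ with a unit $K$-fixed vector $v$ satisfying $\ph=\langle v,\pi_0(\cdot)v\rangle$; positive definiteness is then immediate, and reality of the eigenvalues follows because each $d\pi_0(D_j)$ is symmetric and preserves the one-dimensional space $\cH_{\pi_0}^K$ of smooth vectors. Your own repair is within reach along the lines you sketch in the ``obstacle'' paragraph, but you would have to deploy it differently: spectral permanence for the corner $e\,L^1(G)\,e$ together with symmetry of $L^1(G)$ gives $\sigma_{L^1(K\backslash G/K)}(g^**g)\subset[0,\infty)$ for every right-$K$-invariant $g$, and since a character takes values in the spectrum this yields $\Gamma_\ph(g^**g)\ge 0$, which after the averaging identity $\int_G(f^**f)\check\ph=\int_G\bigl((f^\flat)^**f^\flat\bigr)\check\ph$ does give positive definiteness. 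As written, though, you use spectral permanence only to prove symmetry of the commutative subalgebra, which is strictly weaker and does not suffice.
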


\begin{proof}
Let $\ph$ be a bounded spherical function and 
$$
\Lambda_\ph(f)=\int_G f(x)\ph(x)\,dx
$$
the associated multiplicative functional  on $L^1(K\backslash G/K)$. 
Since $L^1(G)$ is symmetric \cite{L},
applying \cite[Thm. 1, p. 305]{N}, we can extend $\Lambda_\ph$ to an irreducible $*$-representation $\pi$ of the whole algebra $L^1(G)$. In formulae, there is a one dimensional subspace $V$ of the representation space $\mathcal{H}_\pi$ such that 
\[
\pi(f)v  =\Lambda_\ph(f)v \qquad \forall\ v\in V\ ,\ f\in L^1(K\backslash G/K).
\]

Denoting by $\pi_0$ the irreducible unitary representation of the group $G$ on $\mathcal{H}_\pi$ such that
\[
\pi(f)=\int_G f(x)\, \pi_0(x)\, dx\ ,
\]
it follows that, if $v$ is a unit vector in $V$,
\[
\int_G f(x)\,\ph(x\inv)\, dx=\int_G f(x)\, \langle \pi_0(x)v,v\rangle\, dx\ ,\ \forall f\in L^1(K\backslash G/K)\ ,
\]
therefore $\ph$ can be expressed as $\ph(x)=\lan v,\pi_0(x)v\ran$ and $\ph$ is positive definite.
 
 The bi-$K$-invariance of $\ph$ implies that $v\in \cH_{\pi_0}^K$, the subspace of $K$-fixed vectors, which is one-dimensional since $(G,K)$ is a Gelfand pair.  Then $\cH_{\pi_0}^K$ consists of $C^\infty$-vectors. Since  $d\pi_0(D_j)$ preserves  $\cH_{\pi_0}^K$ for each $j$ and is symmetric, we have $d\pi_0(D_j)v=\xi_j v$, with $\xi_j\in\bR$. Il follows that
 $$
 D_j\ph(x)=\lan v,\pi_0(x)d\pi_0(D_j)v\ran=\xi_j\ph(x)\ .\qquad\qedhere
 $$
\end{proof}

We call $\Sigma_\cD$ an {\it embedded copy of $\Sigma$}. Notice that the dimension $\ell$ of the ambient space depends on $\cD$ and can be rather arbitrary, since we are not imposing any minimality condition on $\cD$. The next lemma justifies that the choice of the embedding will be irrelevant for our purposes. The proof is in \cite{ADR2}.

\begin{proposition}\label{D,tildeD}
Let $\cD=\{D_1,\dots,D_\ell\}$ and $\tilde\cD=\{\tilde D_1,\dots,\tilde D_{\tilde\ell}\}$ be two systems of symmetric generators of $\bD(G)^{{\rm Int}(K)}$. Let $p_1,\dots,p_{\tilde\ell}$ be polynomials on $\bR^\ell$, $q_1,\dots,q_\ell$ polynomials on $\bR^{\tilde\ell}$ such that
$$
\tilde D_j=p_j(D_1,\dots,D_\ell)\ ,\quad(j=1,\dots,\tilde\ell)\ ,\qquad D_j=q_j(\tilde D_1,\dots,\tilde D_{\tilde\ell})\ ,\quad(j=1,\dots,\ell)\ .
$$

Setting $P=(p_1,\dots,p_{\tilde\ell})$, $Q=(q_1,\dots,q_\ell)$,  the two maps 
\be\label{PQ}
 {\Sigma_{\cD}\ }\overset P{\underset Q{\myrightleftarrows{\rule{1.5cm}{0cm}}}} \ \Sigma_{\tilde\cD}   
 \ee
 are mutually inverse homeomorphisms. Moreover, there is an exponent $m\ge1$ such that, for $\xi\in\Sigma_\cD$, $\xi'=P(\xi)\in\Sigma_{\tilde\cD}$, then
 \be\label{PQestimates}
\big(1+ |\xi|\big)^{1/m}\lesssim 1+|\xi'|\lesssim \big(1+ |\xi|\big)^m\ .
\ee
 \end{proposition}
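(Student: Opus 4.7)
The plan is to deduce everything from the spectral interpretation of $\Sigma_\cD$ and $\Sigma_{\tilde\cD}$ provided by Theorem~\ref{FerRuf}, using only that polynomial maps and polynomial growth control each other in an elementary way.

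First, I would unpack the maps $P$ and $Q$ in terms of spherical functions. Fix a bounded spherical function $\ph$, and let $\xi=\iota_\cD(\ph)$ and $\xi'=\iota_{\tilde\cD}(\ph)$. Applying the operator identity $\tilde D_j=p_j(D_1,\dots,D_\ell)$ to $\ph$ gives $\xi'_j\ph=p_j(\xi_1,\dots,\xi_\ell)\ph$, so $\xi'=P(\xi)$; symmetrically $\xi=Q(\xi')$. In particular, $P$ sends $\Sigma_\cD$ into $\Sigma_{\tilde\cD}$, $Q$ sends $\Sigma_{\tilde\cD}$ into $\Sigma_\cD$, and $Q\circ P$, $P\circ Q$ are the identity on the respective closed subsets. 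Since polynomial maps between Euclidean spaces are continuous, $P$ and $Q$ restrict to mutually inverse continuous bijections on these closed subsets, hence they are homeomorphisms.

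For the estimates in \eqref{PQestimates}, since each component $p_j$ is a polynomial on $\bR^\ell$ of some degree $d_j$, the trivial estimate $|p_j(\xi)|\lesssim(1+|\xi|)^{d_j}$ yields $|\xi'|=|P(\xi)|\lesssim(1+|\xi|)^M$, where $M=\max_j d_j$. The analogous bound applied to $Q$ gives $|\xi|=|Q(\xi')|\lesssim(1+|\xi'|)^{M'}$ with $M'=\max_j\deg q_j$. Choosing $m\ge\max(M,M',1)$, the first bound provides the upper estimate $1+|\xi'|\lesssim(1+|\xi|)^m$, while the second, once rearranged, provides the lower estimate $(1+|\xi|)^{1/m}\lesssim 1+|\xi'|$.

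There is no substantive obstacle here: the entire argument rides on the spectral characterization from Theorem~\ref{FerRuf} and on the fact that polynomials grow polynomially. The only step requiring a moment of care is observing that the identities among the generators, which a priori hold as operator identities in the relevant algebra of differential operators, descend to scalar identities among the eigenvalues; this is immediate because every bounded spherical function is a simultaneous eigenfunction of the whole algebra.
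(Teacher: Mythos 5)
Your proof is correct, and it is essentially the intended one: the paper itself gives no argument here but defers to \cite{ADR2}, where the same two observations are used --- the operator identities descend to the eigenvalue identities $\xi'=P(\xi)$, $\xi=Q(\xi')$ because bounded spherical functions are joint eigenfunctions of the commutative algebra, and the two-sided estimate \eqref{PQestimates} follows from the trivial polynomial bounds on $P$ and $Q$. No gaps.
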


The space $\cS(\Sigma _\cD)$ has been defined in \eqref{Ssigma}. The next statement implies that validity of  property (S) does not depend on the choice of the embedded copy of~$\Sigma$.

\begin{corollary}[{\cite{ADR2}}]\label{Dindependence}
The maps $P,Q$ in \eqref{PQ} induce isomorphisms between $\cS(\Sigma_\cD)$ and $\cS(\Sigma_{\tilde\cD})$.  
\end{corollary}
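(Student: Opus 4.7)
The plan is to construct, for each $g\in\cS(\bR^{\tilde\ell})$, a Schwartz function $f\in\cS(\bR^\ell)$ that agrees with $g\circ P$ on $\Sigma_\cD$, in such a way that the assignment $g\mapsto f$ is continuous and preserves the ideals defining the quotients in \eqref{Ssigma}. Applying the symmetric construction to $Q$ and invoking $Q\circ P|_{\Sigma_\cD}=\mathrm{id}$, $P\circ Q|_{\Sigma_{\tilde\cD}}=\mathrm{id}$ from Proposition \ref{D,tildeD}, the induced pullbacks $P^*$ and $Q^*$ on the quotients will then be mutually inverse continuous linear maps.

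The obstacle to simply taking $f=g\circ P$ is that this function is $C^\infty$ on $\bR^\ell$ but in general fails to be Schwartz: off $\Sigma_\cD$, $P(\xi)$ may remain bounded while $|\xi|\to\infty$, destroying decay. To localize to a region where $|P(\xi)|$ controls $|\xi|$, I would pick a cutoff $\psi\in C_c^\infty(\bR^\ell)$ with $\psi\equiv 1$ in a neighborhood of $0$ and set
\[
f(\xi)\;=\;\psi\bigl(\xi-Q(P(\xi))\bigr)\,g(P(\xi)).
\]
Since $Q(P(\xi))=\xi$ for $\xi\in\Sigma_\cD$, one has $f|_{\Sigma_\cD}=(g\circ P)|_{\Sigma_\cD}$, and if $g|_{\Sigma_{\tilde\cD}}=0$ then $P(\xi)\in\Sigma_{\tilde\cD}$ for $\xi\in\Sigma_\cD$ forces $f|_{\Sigma_\cD}=0$, so the construction passes to the quotients.

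To verify $f\in\cS(\bR^\ell)$, note that on $\supp\bigl[\psi(\,\cdot\,-Q(P(\,\cdot\,)))\bigr]$ one has $|\xi-Q(P(\xi))|\leq R$ for some fixed $R$; combined with the trivial polynomial bound $|Q(\eta)|\lesssim (1+|\eta|)^M$ coming from $Q$ being a polynomial, this yields
\[
(1+|\xi|)^{1/M}\;\lesssim\;1+|P(\xi)|
\]
throughout the support. Schwartz decay of $g$ in $|P(\xi)|$ thus translates to Schwartz decay of $f$ in $|\xi|$. Derivatives of $f$ expand by chain and product rules into finite sums of products of derivatives of $g$ at $P(\xi)$, derivatives of $P$ and $Q$ (which are polynomials), and derivatives of $\psi$ evaluated at $\xi-Q(P(\xi))$; each summand is supported in the same set, where polynomial factors in $\xi$ are dominated by rapid decay of the $g$-derivative in $|P(\xi)|$. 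The resulting bounds are linear in Schwartz seminorms of $g$, giving continuity of $g\mapsto f$ from $\cS(\bR^{\tilde\ell})$ to $\cS(\bR^\ell)$, hence continuity of $P^*$ between the quotients.

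The main obstacle is the bookkeeping in this last step: one must verify that estimating an order-$\alpha$ derivative of $f$ only calls upon Schwartz seminorms of $g$ whose order and polynomial weight grow in a controlled way with $|\alpha|$, so that continuity at the level of the Fréchet quotients follows. This is a mechanical consequence of the chain rule combined with the above support estimate, and no further input beyond the polynomial growth bounds \eqref{PQestimates} and the polynomial nature of the maps $P,Q$ is needed.
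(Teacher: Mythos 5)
Your construction is correct, and it is essentially the argument of the cited reference \cite{ADR2}, to which the paper defers instead of proving Corollary~\ref{Dindependence} in place: the cutoff $f(\xi)=\psi\bigl(\xi-Q(P(\xi))\bigr)\,g(P(\xi))$ together with the support estimate $(1+|\xi|)^{1/M}\lesssim 1+|P(\xi)|$ is exactly the standard device used there. One small remark: that support estimate needs only the polynomial nature of $Q$, not \eqref{PQestimates}, as your own derivation in fact shows.
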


The next statement shows the r\^ole of $\Sigma_\cD$ in the spectral analysis of the operators in $\cD$ (By \cite{Ne}, the $D_j$, that we have chosen to be symmetric, are essentially self-adjoint).

\begin{proposition}\label{jointspectrum}
The set $\Sigma_\cD$ is the joint spectrum of the operators $D_1,\dots,D_\ell$ as self-adjoint operators on $L^2(G/K)$.
\end{proposition}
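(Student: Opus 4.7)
The operators $D_1,\dots,D_\ell$ are, by the symmetry assumption together with the result of \cite{Ne} cited above, essentially self-adjoint on $C_c^\infty(G/K)$, and they commute pairwise since they lie in the commutative algebra $\bD(G/K)$. The joint spectral theorem therefore produces a projection-valued measure $E$ on $\bR^\ell$ with $D_j=\int x_j\, dE(x)$, and by definition the joint spectrum of $(D_1,\dots,D_\ell)$ is $\text{supp}(E)$. My plan is to identify $E$ via the direct integral decomposition of $L^2(G/K)$ into spherical representations, and then to show that the Plancherel measure that appears has full support on~$\Sigma$.

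For the first step I would invoke the spherical Plancherel decomposition. Since $(G,K)$ is a Gelfand pair, every $\pi\in\widehat G$ occurring in $L^2(G/K)$ is spherical and contributes a one-dimensional $K$-fixed subspace, giving an isomorphism
$$
L^2(G/K)\cong \int^\oplus_{\Sigma} \cH_{\pi_\ph}\, d\mu(\ph),
$$
where $\mu$ is the spherical Plancherel measure and $\ph$ parametrizes the spherical irreducibles. On the fiber $\cH_{\pi_\ph}$ each $D_j$ acts as the scalar $\xi_j=\iota_\cD(\ph)_j$: indeed $d\pi_\ph(D_j)v=\xi_j v$ on the $K$-fixed line, exactly as in the proof of Lemma~\ref{xireal}, and commutation of $D_j$ with left translations by $G$ propagates the scalar to the whole fiber. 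Under the homeomorphism $\iota_\cD\colon\Sigma\to\Sigma_\cD$ the measure $\mu$ then pushes forward to a scalar spectral measure realizing $E$, and in particular
$$
\text{supp}(E)=\iota_\cD\bigl(\text{supp}(\mu)\bigr)\subseteq\Sigma_\cD,
$$
which settles one inclusion and reduces the proposition to showing $\text{supp}(\mu)=\Sigma$.

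This last claim is where polynomial growth enters decisively. Since $G$ has polynomial growth it is amenable, and amenability yields the coincidence of the reduced and maximal spherical $C^*$-norms: for every $f\in L^1(K\backslash G/K)$ one has
$$
\sup_{\ph\in\Sigma}|\cG f(\ph)|=\|L_f\|_{B(L^2(K\backslash G/K))}=\sup_{\ph\in\text{supp}(\mu)}|\cG f(\ph)|.
$$
Because $\Sigma$ is locally compact, being homeomorphic to the closed set $\Sigma_\cD\subset\bR^\ell$, the image $\cG(L^1)$ is a self-adjoint point-separating subalgebra of $C_0(\Sigma)$, hence dense by Stone--Weierstrass; the identity above extends to every $g\in C_0(\Sigma)$, and a Urysohn-type argument then forces $\text{supp}(\mu)=\Sigma$, completing the argument.

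The main obstacle I see is this last identification. The amenability route is clean but hides a non-trivial $C^*$-algebraic fact. A more self-contained alternative would be to build explicit Weyl sequences for each $\ph\in\Sigma$ by multiplying $\ph$ by cutoffs $\chi_n$ supported on balls of increasing radius; polynomial growth should ensure that the commutator errors $[D_j,\chi_n]\ph$, localized on shells of polynomial volume and involving only bounded derivatives of $\ph$, are dominated by the bulk norm $\|\chi_n\ph\|_2$. This alternative requires delicate control of the growth of $\ph$ on balls and I would therefore prefer the amenability argument.
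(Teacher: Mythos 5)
Your overall architecture matches the paper's: both proofs reduce the proposition to (i) realizing $D_1,\dots,D_\ell$ as multiplication by the coordinates $\xi_j$ in a Plancherel-type decomposition over $\Sigma$, and (ii) showing that the Plancherel measure has full support in $\Sigma$. The execution of (i) differs. The paper stays on $L^2(K\backslash G/K)$, where the Plancherel--Godement theorem gives a unitary $\cG\colon L^2(K\backslash G/K)\to L^2(\Sigma,\beta)$ diagonalizing the $D_j$, and then transports the resulting spectral measure to all of $L^2(G/K)$ by observing that each spectral projection $E(A)$ is the convolution operator $f\mapsto f*\cG\inv\chi_A$, hence already defined on the whole of $L^2(G/K)$. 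You instead decompose $L^2(G/K)$ itself as a direct integral of irreducibles and argue fiberwise; this is viable but uses a strictly stronger input (the multiplicity-free direct-integral decomposition of the quasi-regular representation, plus a Schur-type argument --- best phrased via resolvents or spectral projections --- to make the unbounded operators $d\pi_\ph(D_j)$ scalar on each fiber), all of which the paper's convolution-kernel device sidesteps.

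The genuine gap is in step (ii). Amenability gives $C^*(G)=C^*_r(G)$ and therefore controls exactly those characters of $L^1(K\backslash G/K)$ that extend to $C^*(G)$, namely the \emph{positive definite} bounded spherical functions, for which $|\cG f(\ph)|\le\|f\|_{C^*_r(G)}$. It does not by itself yield the first equality in your displayed chain, because $\sup_{\ph\in\Sigma}|\cG f(\ph)|$ ranges over \emph{all} bounded spherical functions, i.e., it is the $L^1$-spectral radius, which can strictly exceed the maximal $C^*$-norm when the algebra is not symmetric. This is not a vacuous worry: for amenable pairs of exponential growth (solvable harmonic $NA$ groups, for instance) the bounded spherical functions fill a complex tube while the Plancherel measure lives on a proper real slice, so ``amenable $\Rightarrow$ full support'' fails. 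What rescues your argument here is precisely Lemma \ref{xireal}: for polynomial growth every bounded spherical function is positive definite, and that lemma rests on the symmetry of $L^1(G)$ --- the very ingredient the paper invokes when it asserts that $\beta$ is supported on all of $\Sigma$. You should therefore route the support claim through Lemma \ref{xireal} (equivalently, through symmetry of $L^1(K\backslash G/K)$) rather than through amenability alone, and add a word on why the norm $\|L_f\|_{B(L^2(G))}$ appearing in the $C^*_r$-bound coincides with the norm of $L_f$ on the bi-$K$-invariant subspace. With those repairs your argument closes; the Weyl-sequence alternative you sketch would meet the same obstruction in disguise and is best abandoned.
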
 

\begin{proof}
 By the Plancherel-Godement theorem \cite{Far, vanD}, the spherical transform $\cG$ is a unitary operator from $L^2(K\backslash G/K)$ onto $L^2(\Sigma,\beta)$, where $\beta$ is the Plancherel measure on $\Sigma$. By symmetry of $L^1(K\backslash G/K)$, $\beta$ is supported on all of $\Sigma$. Given a generating system $\cD=\{D_1,\dots,D_\ell\}$ of $\bD(G/K)$, we call $\beta_\cD$ the Radon measure on $\bR^\ell$, with support equal to $\Sigma_\cD$, which is the push-forward of $\beta$ by $\iota_\cD$. 
 
 Since the operators $D_j$ preserve the subspace $L^2(K\backslash G/K)$, they can be regarded as self-adjoint operators on $L^2(K\backslash G/K)$. 
  The spherical transform $\cG$ intertwines $D_j$ with the multiplication operator $\widetilde D_j$ on $L^2(\Sigma_\cD,\beta_\cD)$ given by $\widetilde D_jg(\xi)=\xi_jg(\xi)$. The joint spectrum of the $\widetilde D_j$ is the support of $\beta_\cD$, i.e., $\Sigma_\cD$.
  
  This identifies $\Sigma_\cD$ as the joint spectrum of the $D_j$ in the algebra $\cL\big(L^2(K\backslash G/K)\big)$. The spectral measure $E(A)$ of a Borel set $A\subset\Sigma_\cD$ of finite $\beta_\cD$-measure is the convolution operator
  $$
  E(A)f=f*\cG\inv \chi_A\ .
  $$
  
  But these operators are defined on all of $L^2(G/K)$ and define a resolution of the identity on it based on $\Sigma_\cD$ and such that
  $$
  D_j=\int_{\Sigma_\cD}\xi_j\,dE(\xi)\ ,
  $$
  for all $j$. This concludes the proof.
\end{proof}

\bigskip

\section{Property (S) on pairs of polynomial growth}

We begin by defining the Schwartz space $\cS(G)$ on a connected Lie group $G$ with polynomial  growth. 

For a fixed compact, symmetric neighborhood $U$ of $e$, define\footnote{The choice of $U$ is irrelevant. Moreover, $|x|$ can be replaced by $d(x,e)$ for any left-invariant  quasi-distance $d$ on $G$, ``connected'' in the sense of \cite{VSC} and compatible with its topology. This is the case, e.g., for the Carnot-Carath\'eodory distance induced by any H\"ormander system in $\fg$.}
$$
|x|=\begin{cases} 0&\text{ if }x=e\\ \min\{n:x\in U^n\}&\text{ if }x\neq e\ .\end{cases}
$$

Let $\{X_1,\dots,X_n\}$ be an ordered linear basis of  $\fg$ and set, for every multiindex $\al=(\al_1,\dots,\al_n)$,
$$
X^\al=X_1^{\al_1}X_2^{\al_2}\cdots X_n^{\al_n}\ .
$$

For $N\in\bN$, the $N$-th Schwartz norm on $\cS(G)$ is given by
$$
\|f\|_{(N)}=\sup_{|\al|\le N}\sup_{x\in G}\big(1+|x|\big)^N\big|X^\al f(x)\big|\ .
$$
\medskip

In connection with property (S), we recall the following general result from \cite{Mar1}.

 \begin{proposition}[\cite{Mar1}, Prop. 4.2.1]
 Let $\cD=\{D_1,\dots,D_\ell\}$ be a weighted subcoercive family of self-adjoint, strongly commuting, left-invariant differential operators of a Lie group $G$ with polynomial growth. Given $m\in\cS(\bR^\ell)$, the operator $m(D_1,\dots,D_\ell)$ has a Schwartz convolution kernel $K=\cK_\cD m$. Moreover, the operator $\cK_\cD:\cS(\bR^\ell)\longrightarrow \cS(G)$ is continuous.
  \end{proposition}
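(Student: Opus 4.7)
The argument has two stages: existence of $K_m$ as a tempered distribution, and upgrading to Schwartz regularity with quantitative control.

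For existence, strong commutativity of the self-adjoint family $\cD$ yields a joint spectral resolution $E$ on $\bR^\ell$, so for any bounded Borel $m$ the operator $m(D_1,\dots,D_\ell) := \int m\, dE$ is bounded on $L^2(G)$. Left-invariance of each $D_j$ forces every spectral projection, and hence $m(D)$, to commute with right translations; the Schwartz-kernel argument for right-translation-invariant operators then yields a unique tempered distribution $K_m$ with $m(D)f = f*K_m$.

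For the Schwartz regularity, use the weighted subcoercive hypothesis to single out $L = p(D_1,\dots,D_\ell)$ whose symbol $p(\xi)$ is positive and polynomially growing and which is itself weighted subcoercive. By the Hebisch--ter Elst--Robinson--Martini theorem, the heat semigroup $\{e^{-tL}\}_{t>0}$ admits a right-convolution kernel $h_t\in\cS(G)$ with seminorms controlled quantitatively in $t$; integrating $(1+L)^{-N}=\Gamma(N)^{-1}\int_0^\infty t^{N-1}e^{-t(1+L)}\,dt$ then produces a Schwartz convolution kernel $R_N$ for $(1+L)^{-N}$ once $N$ is large enough. For $m\in\cS(\bR^\ell)$, factor $m(\xi) = (1+p(\xi))^{-N} g_N(\xi)$ with $g_N := (1+p)^N m \in \cS(\bR^\ell)\subset L^\infty(\bR^\ell)$; then $m(D) = (1+L)^{-N}\circ g_N(D)$ as operators on $L^2(G)$, so $K_m = K_{g_N}*R_N$. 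Because $(1+L)^{-N}$ is Hilbert--Schmidt (its kernel $R_N$ is Schwartz, hence in $L^2$) and $g_N(D)$ is bounded with operator norm controlled by $\|(1+p)^N m\|_\infty$---a Schwartz seminorm of $m$---the composition $m(D)$ is Hilbert--Schmidt and $K_m\in L^2(G)$. Iterating the factorization with larger $N$, absorbing polynomial weights $(1+|x|)^M$ into the kernel via the Gaussian bounds on $h_t$, and applying left-invariant derivatives $X^\alpha$ through the convolution gives $X^\alpha[(1+|\cdot|)^M K_m]\in L^2(G)$ for every $\alpha$ and $M$. The Sobolev embedding on the polynomial-growth group $G$ then converts these $L^2$-bounds into pointwise decay, yielding $K_m\in\cS(G)$ with each Schwartz seminorm bounded by finitely many Schwartz seminorms of $m$; this is exactly the continuity of $\cK_\cD$.

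The principal obstacle is the Schwartz property of the heat kernel $h_t$, with quantitative $t$-dependent seminorm bounds: this is Martini's generalization, to weighted subcoercive families, of the Hebisch and ter Elst--Robinson heat-kernel theorems, and it rests on Gaussian Davies--Gaffney estimates, Nash-type inequalities, and a Sobolev calculus adapted to the weighted filtration of $\fU(\fg)$. Once that input is in hand, the factorization, resolvent-integral, and Sobolev-embedding steps above are routine functional-calculus manipulations.
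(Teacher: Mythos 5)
The paper does not prove this proposition: it is imported verbatim from Martini's thesis \cite{Mar1}, so there is no internal proof to compare against. Your outline is, in substance, the strategy Martini actually follows: reduce to a single positive weighted subcoercive operator $L=p(D_1,\dots,D_\ell)$ (whose existence is essentially the definition of a weighted subcoercive system), use the Schwartz/Gaussian bounds for its heat kernel to control the kernels of $(1+L)^{-N}$, factor $m=(1+p)^{-N}g_N$, and convert weighted $L^2$ estimates on $X^\alpha K_m$ into pointwise Schwartz seminorms by Sobolev embedding on the polynomial-growth group, keeping track of constants to obtain continuity of $\cK_\cD$. So the route is the right one, and you correctly isolate the genuinely hard external input, namely the heat kernel theorem for weighted subcoercive operators.

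Two local slips should be repaired. First, a left-invariant operator commutes with \emph{left} translations, not right ones, and it is precisely the operators commuting with left translations that are realized as right convolutions $f\mapsto f*K$; your conclusion $m(D)f=f*K_m$ is correct, but the justification as written has the sides reversed. Second, and more substantively, $(1+L)^{-N}$ is \emph{not} Hilbert--Schmidt on a noncompact group: its integral kernel is $R_N(y\inv x)$, which lies in $L^2(G\times G)$ only if $R_N=0$, so the ``bounded composed with Hilbert--Schmidt'' argument does not apply. The correct one-line fix is to write $K_m=m(D)\delta=g_N(D)\bigl((1+L)^{-N}\delta\bigr)=g_N(D)R_N$ and use that $g_N(D)$ is bounded on $L^2(G)$ with operator norm at most $\|g_N\|_\infty$ while $R_N\in L^2(G)$; this gives $\|K_m\|_{L^2}\le\|(1+p)^N m\|_\infty\,\|R_N\|_{L^2}$, which is exactly the quantitative estimate your argument needs. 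With these repairs, and with the weighted and differentiated versions of the same estimate carried out as you indicate, your sketch is a faithful reconstruction of the cited proof.
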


By Proposition \ref{jointspectrum}, this implies that $\cG\inv$ maps $\cS(\Sigma_\cD)$ continuously into $\cS(K\backslash G/K)$, see also \cite{ADR2} for the case of nilpotent pairs.
To prove the corresponding statement for $\cG$ is a more challenging problem. The open mapping theorem (cf. \cite{Tr} for its version on Fr\'echet spaces) allows us to disregard continuity of $\cG$  and simply prove that it maps $\cS(K\backslash G/K)$  into $\cS(\Sigma_\cD)$.

\begin{proposition}\label{S'}
Property {\rm(S)} is satisfied if and only if 
\begin{enumerate}
\item[\rm(S')] for every $f\in \cS(K\backslash G/K)$, $\cG f$ extends to a Schwartz function on $\bR^\ell$.
\end{enumerate}
\end{proposition}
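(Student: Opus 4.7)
The plan is to observe that the forward implication (S) $\Rightarrow$ (S') is immediate from the definition in \eqref{Ssigma} of $\cS(\Sigma_\cD)$ as the space of restrictions to $\Sigma_\cD$ of Schwartz functions on $\bR^\ell$: if $\cG f\in\cS(\Sigma_\cD)$ then by construction $\cG f$ extends to a Schwartz function on $\bR^\ell$. All of the substance of the proposition therefore lies in the converse, and the strategy is to combine (S') with the continuity of $\cG\inv$ that is already on the table.

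For (S') $\Rightarrow$ (S), I would first unpack (S'): a function on $\Sigma_\cD$ extends to a Schwartz function on $\bR^\ell$ exactly when it lies in $\cS(\Sigma_\cD)$, so (S') is equivalent to the set-theoretic statement that $\cG$ sends $\cS(K\backslash G/K)$ into $\cS(\Sigma_\cD)$. Next, recall from the paragraph preceding the statement that Martini's proposition on weighted subcoercive systems, together with Proposition~\ref{jointspectrum}, yields a continuous map
\[
\cG\inv:\cS(\Sigma_\cD)\longrightarrow \cS(K\backslash G/K),
\]
so one half of the desired isomorphism is already available for free. Since $\cG$ is injective on $L^1(K\backslash G/K)$, and since $\cG\circ\cG\inv=\mathrm{id}$ holds as an identity of functions on $\Sigma$, the restriction of $\cG$ to $\cS(K\backslash G/K)$ is a bijection onto $\cS(\Sigma_\cD)$.

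It then remains only to upgrade this bijection to a topological isomorphism, which I would do by invoking the open mapping theorem for Fr\'echet spaces: $\cG\inv$ is a continuous linear bijection between two Fr\'echet spaces, so its inverse $\cG$ is continuous, and (S) follows. The minor verification needed here is that both spaces are indeed Fr\'echet. This is standard for $\cS(K\backslash G/K)$ viewed as a closed subspace of $\cS(G)$, and for $\cS(\Sigma_\cD)$ it follows from its presentation in \eqref{Ssigma} as the quotient of the Fr\'echet space $\cS(\bR^\ell)$ by the closed subspace of Schwartz functions vanishing on $\Sigma_\cD$, endowed with the quotient topology.

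There is no real obstacle: the entire weight of the argument has been shifted onto Martini's kernel theorem (which supplies continuity of $\cG\inv$) and onto the open mapping theorem. The only point at which one must be slightly careful is the Fr\'echet-space bookkeeping just mentioned, which ensures that the open mapping theorem is legitimately available in the context of the quotient topology used to define $\cS(\Sigma_\cD)$.
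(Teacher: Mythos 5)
Your proposal is correct and follows essentially the same route as the paper, which proves this proposition implicitly in the preceding paragraph: continuity of $\cG\inv:\cS(\Sigma_\cD)\to\cS(K\backslash G/K)$ comes from Martini's kernel theorem together with Proposition~\ref{jointspectrum}, and the open mapping theorem for Fr\'echet spaces upgrades the resulting continuous bijection to an isomorphism, reducing (S) to the purely set-theoretic condition (S'). Your additional bookkeeping (injectivity of $\cG$ on $L^1$, the Fr\'echet property of the quotient $\cS(\Sigma_\cD)$) is exactly the right verification and matches what the paper leaves tacit.
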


\bigskip

\section{Strong Gelfand pairs and embeddings of their spectra}

For a strong Gelfand pair $(G,K)$, not necessarily of polynomial growth, we have seen that the $K$-central algebra $L^1(G)^{{\rm Int}(K)}$ splits as the sum of the subalgebras (in fact ideals) $L^1_\tau(G)^{{\rm Int}(K)}$, with $\tau\in\widehat K$, and that its spectrum $\Sigma$ is the disjoint union of their spectra $\Sigma^\tau$, cf.  \eqref{partition}.

We will coherently denote by $\cG f\in C_0(\Sigma)$ the spherical transform of  $f\in L^1(G)^{{\rm Int}(K)}$ and by $\cG_\tau f\in C_0(\Sigma^\tau)$ the spherical transform of  $f\in L^1_\tau(G)^{{\rm Int}(K)}$, so that, taking into account the Fourier decomposition of $f$ in \eqref{f->ftau},
$$
(\cG f)_{|_{\Sigma^\tau}}=\cG_\tau f_\tau\ .
$$

In particular, $\cG f$ is supported on $\Sigma^\tau$ if and only if $f\in L^1_\tau(G)^{{\rm Int}(K)}$.

In the algebra $\bD(G)^{{\rm Int}(K)}\cong\fU(\fg)^{{\rm Ad}(K)}$ we now choose  a generating system $\cD=(D_1,\dots, D_\ell)$ where each $D_j$ is symmetric and $D_1,\dots, D_r$, $r<\ell$, generate 
$\fU(\fk)^{{\rm Ad}(K)}$, the center of $\fU(\fk)$. With a language abuse, for $j\le r$ we use the same symbol $D_j$ to denote the differential operator on  both $G$ and  $K$.

 For $f$ of type $\tau$ and $j\le r$, we have
$$
D_j f=d_\tau D_j(f*_K\overline{\chi_\tau})=d_\tau f*_K(D_j\overline{\chi_\tau})\ ,
$$
so that, recalling the notation
$$
\iota_\cD(\ph)=(\xi_1,\dots,\xi_\ell)\ ,\quad \text{if }\ D_j\ph=\xi_j\ph\ , j=1,\dots,\ell\ ,
$$
 the first $r$ entries of $\iota_\cD(\ph)$ are common to all spherical functions of the same type and determine the type uniquely.

\medskip

Assuming now $G$ of polynomial growth, 
we decompose $\xi\in\bR^\ell$ as $(\xi',\xi'')\in\bR^r\times\bR^{\ell-r}$ (preserving the order of coordinates), and set 
\be\label{Sigmatau}
\iota_\cD=(\iota'_\cD,\iota''_\cD)\ ,\qquad \xi'_\tau=\iota'_\cD(\ph)\quad(\ph\in\Sigma^\tau)\ ,\qquad\Sigma^\tau_\cD=\iota''(\Sigma^\tau)\ .
\ee
where the components of $\xi'_\tau$ are the eigenvalues of the character $\chi_\tau$ under $D_1,\dots,D_r$. Then
$$
\Sigma_\cD=\bigcup_{\tau\in\widehat K}\{\xi'_\tau\}\times\Sigma^\tau_\cD\ .
$$

\section{Reduction of {\rm(S')} to $K$-types}

In this section we prove the equivalence between condition (S') in Proposition \ref{S'} and a weaker form of it, which takes into account the decomposition into $K$-types. 

\begin{theorem}\label{reduction}
Property (S') holds for a strong Gelfand pair $(G,K)$ of polynomial growth if and only if the following condition is satisfied:
\begin{enumerate}
\item[\rm(S'')] given $f\in \cS(G)^{{\rm Int}(K)}$ and $N\in\bN$, for each component $f_\tau\in \cS_\tau(G)^{{\rm Int}(K)}$ of $f$,   $\cG_\tau f_\tau$ admits a Schwartz extension $g_\tau^N$ such that $\|g_\tau^N\|_{(N)}$ is rapidly decaying in $\tau$.
\end{enumerate}
\end{theorem}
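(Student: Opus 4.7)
The strategy is to exploit the slice decomposition recalled in Section~6,
\begin{equation*}
\Sigma_\cD=\bigcup_{\tau\in\widehat K}\{\xi'_\tau\}\times\Sigma^\tau_\cD\qquad\text{(disjoint union)},
\end{equation*}
in order to translate Schwartz control on $\Sigma_\cD$ into Schwartz control on each $\Sigma^\tau_\cD$ with appropriate uniformity in $\tau$. Since $D_1,\dots,D_r$ generate $\fU(\fk)^{{\rm Ad}(K)}$ and different $K$-types are separated by their infinitesimal characters, the map $\tau\mapsto\xi'_\tau\in\bR^r$ is injective, $|\xi'_\tau|\to\infty$ as $\tau$ varies through $\widehat K$, and a standard growth estimate for the Harish--Chandra image bounds the separation
\begin{equation*}
\rho_\tau:=\tfrac12\min_{\tau'\neq\tau}|\xi'_\tau-\xi'_{\tau'}|
\end{equation*}
from below by a negative power of $(1+|\xi'_\tau|)$. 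Thus ``rapid decay in $\tau$'' is equivalent to rapid decay in $|\xi'_\tau|$.

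The direction \textbf{(S')}$\Rightarrow$\textbf{(S'')} is immediate. I would pick any $g\in\cS(\bR^\ell)$ extending $\cG f$ and set $g_\tau^N(\xi''):=g(\xi'_\tau,\xi'')$, independent of $N$. This extends $\cG_\tau f_\tau$ because $(\cG f)|_{\{\xi'_\tau\}\times\Sigma^\tau_\cD}=\cG_\tau f_\tau$, and the Schwartz seminorms of $g$ give
\begin{equation*}
(1+|\xi'_\tau|)^M\,\|g_\tau^N\|_{(N)}\;\lesssim\;\|g\|_{(N+M)}\qquad(M\in\bN),
\end{equation*}
with implicit constant depending only on $N$ and $M$, which is the desired rapid decay in $\tau$.

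For \textbf{(S'')}$\Rightarrow$\textbf{(S')} the plan is to build one global extension by glueing suitably chosen per-$\tau$ extensions with cutoffs. Fix $\psi\in C^\infty_c(\bR^r)$ with $\psi(0)=1$ supported in the unit ball, and put $\psi_\tau(\xi'):=\psi((\xi'-\xi'_\tau)/\rho_\tau)$, so that the $\psi_\tau$ have pairwise disjoint supports. If I can produce a single family $\{G_\tau\}\subset\cS(\bR^{\ell-r})$ of extensions of $\cG_\tau f_\tau$ whose \emph{every} seminorm $\|G_\tau\|_{(M)}$ is rapidly decaying in $\tau$, then
\begin{equation*}
g(\xi',\xi''):=\sum_{\tau\in\widehat K}\psi_\tau(\xi')\,G_\tau(\xi'')
\end{equation*}
is a locally finite sum with $g|_{\Sigma_\cD}=\cG f$, and by disjointness of supports each Schwartz seminorm of $g$ is controlled by $\sup_\tau (1+|\xi'_\tau|)^M\rho_\tau^{-M}\,\|G_\tau\|_{(M)}$; the polynomial lower bound on $\rho_\tau$ then makes this supremum finite.

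The construction of such $G_\tau$ is the main technical obstacle, since (S'') provides, for each $N$ separately, extensions $g_\tau^N$ with only the $(N)$-seminorm under control. I would handle this by dyadic patching in $\xi''$: letting $\{\eta_k\}_{k\ge 0}$ be a smooth partition of unity on $\bR^{\ell-r}$ subordinate to the annuli $\{2^{k-1}\le 1+|\xi''|\le 2^{k+1}\}$, set $G_\tau:=\sum_k \eta_k\,g_\tau^{N_k}$ for a sequence $N_k\to\infty$ chosen fast enough. On the $k$-th annulus $(1+|\xi''|)\sim 2^k$, and for $|\al|\le M\le N_k$ one has $(1+|\xi''|)^M|\partial^\al g_\tau^{N_k}(\xi'')|\lesssim \|g_\tau^{N_k}\|_{(N_k)}$, which is rapidly decaying in $\tau$ by (S''); summing in $k$ delivers, for every fixed $M$, rapid decay of $\|G_\tau\|_{(M)}$ in $\tau$. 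Tuning $N_k$ so that the bound holds simultaneously for all $M$, together with a careful verification of the polynomial separation estimate on the $\xi'_\tau$ (which relies on the Harish--Chandra description of $\fU(\fk)^{{\rm Ad}(K)}$ and the polynomial growth of its symmetric generators), constitutes the heart of the proof.
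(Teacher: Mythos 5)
Your first direction coincides with the paper's: restrict a global Schwartz extension $g$ to the slices $\xi'=\xi'_\tau$ and read off the rapid decay from the seminorms of $g$. For the converse, however, you take a genuinely different route. The paper never glues cutoffs centered at the points $\xi'_\tau\in\bR^r$; instead (Lemma \ref{interpolation}) it pulls the data back to the shifted weight lattice $\mu_\tau+\rho$ in $\ft^*$, where the separation between distinct points is \emph{uniform}, interpolates there with bumps of fixed width, and then pushes the resulting Weyl-invariant Schwartz function forward through $P=(p_1,\dots,p_r)$ using G.~Schwarz's theorem together with Mather's continuous extension operator. This is precisely designed to avoid the one estimate your construction cannot do without: a quantitative lower bound on $\rho_\tau=\tfrac12\min_{\tau'\neq\tau}|\xi'_\tau-\xi'_{\tau'}|$. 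Your assertion that such a bound follows from ``a standard growth estimate for the Harish--Chandra image'' is the gap: growth (i.e., upper) estimates on $P$ say nothing about how well $P$ \emph{separates} distinct dominant weights, and injectivity of $\tau\mapsto\xi'_\tau$ alone gives no rate. The claim is in fact true --- one can either choose the generators $p_j$ so that they take values with bounded denominators on the shifted weight lattice (whence $\rho_\tau$ is bounded below by a constant), or combine a {\L}ojasiewicz inequality for the orbit map on the unit ball with the homogeneity of the $p_j$ --- but some such argument must be supplied, and Proposition \ref{D,tildeD} must be invoked to justify the change of generators. You should also cap the radius of $\operatorname{supp}\psi_\tau$ at $\min(\rho_\tau,1)$, since a priori $\rho_\tau$ could be large and then $1+|\xi'|$ on $\operatorname{supp}\psi_\tau$ would not be comparable to $1+|\xi'_\tau|$.

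Two further remarks. First, your dyadic patching in $\xi''$ is a roundabout way of doing what the paper does in one stroke: since $|\xi'_\tau|\to\infty$, one can simply pick a single index $N(\tau)$ per $\tau$ (setting $N(\tau)=N$ for $r_N<|\xi'_\tau|\le r_{N+1}$, where $\|g_\tau^N\|_{(N)}\le|\xi'_\tau|^{-N}$ for $|\xi'_\tau|>r_N$) and take $G_\tau=g_\tau^{N(\tau)}$; all seminorms of this single family then decay rapidly in $\tau$ because for each fixed $(M,N)$ only finitely many $\tau$ have $N(\tau)<\max(M,N)$. Your patching can be made to work, but the tuning of $N_k$ against the constants implicit in (S'') is itself a diagonal argument and should be spelled out. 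Second, your reduction of ``rapid decay in $\tau$'' to rapid decay in $|\xi'_\tau|$ is exactly \eqref{xi'/tau} in the paper, so that part is fine. In summary: the easy implication matches the paper; the hard implication is a legitimate alternative architecture whose crux --- the polynomial separation of the slice points $\xi'_\tau$ --- is asserted rather than proved, and is exactly the difficulty the paper's lattice-interpolation-plus-Schwarz--Mather argument is built to sidestep.
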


Before giving the proof, and in order to clarify the meaning of ``rapid decay in $\tau$'', we must say more about the set
$$
\Sigma'_\cD=\{\xi'_\tau:\tau\in\widehat K\}\subset\bR^r\ .
$$

We denote by 
 $\kappa_\tau$ the infinitesimal character of $\tau\in\widehat K$, i.e., the multiplicative functional on $\fU(\fk)^{{\rm Ad}(K)}\cong \bD(K)^{{\rm Int}(K)}$ such that, for $D\in  \bD(K)^{{\rm Int}(K)}$ (i.e., in the center of $\bD(K)$),

$$
 d\tau(D)=\kappa_\tau(D)I\ .
$$

We fix  a maximal toral subalgebra $\ft$ of $\fk$, denote the Weyl group of $(\fk,\ft)$ by $W$ and  fix a positive system of roots. We also call~$\rho$ half the sum  of the positive roots and $\mu_\tau$ the highest weight of $\tau$.

If $\gamma: \bD(K)^{{\rm Int}(K)}\longrightarrow \cP(\ft^*)^W$ is the Harish-Chandra isomomorphism \cite[Thm. 4.10.3]{V}, then 
$$
\kappa_\tau(D)=\big(\gamma(D)\big)(\mu_\tau+\rho)\ .
$$

In particular, setting $p_j= \gamma(D_j)$ for $j=1,\dots, r$, the polynomials $p_1,\dots,p_r$  generate $\cP(\ft^*)^W$ and  we have
\be\label{xi'=p}
\xi'_\tau=\big(p_1(\mu_\tau+\rho),\dots, p_r(\mu_\tau+\rho)\big) \ .
\ee

Consider the map $P=(p_1,\dots,p_r)$ from $\ft^*$ to $\bR^r$. We may assume, by Proposition \ref{D,tildeD}, that one of the polynomials $p_j(t)=|t|^2$ where $|\ |$ is a Weyl-invariant Hilbert norm on $\ft^*$. 

Since $P$ is a homeomorphism of $\ft^*/W$ onto $P(\ft^*)$ \cite{GS},  we can conclude that
\be\label{xi'/tau}
C|\mu_\tau+\rho|^2\le 1+|\xi'_\tau|\le C'|\mu_\tau+\rho|^{d} \ ,
\ee
for some $d\ge2$.
Therefore, rapid decay in $\tau$ can be equivalently expressed as rapid decay w.r. to $|\xi'_\tau|$ or to $|\mu_\tau|$.

\begin{lemma}\label{interpolation}
There are sequences $\{M(N)\}$, $\{A_N\}$ such that  any rapidly decreasing function $a$ on $\Sigma'_\cD$ can be interpolated by a Schwartz function $u\in\cS(\bR^r)$ such that, for every $N$,
\be\label{a->g}
\|u\|_{(N)}=\sup_{\xi'\in\bR^r}\sup_{|\al|\le N}\big(1+|\xi'|\big)^N\big|\de^\al u(\xi')\big|\le A_N\sup_\tau\big(1+|\xi'_\tau|\big)^{M(N)}\big|a(\xi'_\tau)\big|\ .
\ee
\end{lemma}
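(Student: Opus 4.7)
The plan is to work ``upstairs'' on $\ft^*$, where by \eqref{xi'=p} the map $P=(p_1,\dots,p_r)$ sends $\mu_\tau+\rho$ to $\xi'_\tau$ and realizes $\ft^*/W$ as $P(\ft^*)$. I would first interpolate $a$ by a $W$-invariant Schwartz function $v$ on $\ft^*$ taking the value $a(\xi'_\tau)$ at every point of the Weyl orbit $W(\mu_\tau+\rho)$, and then descend $v$ along $P$ to a Schwartz function $u$ on $\bR^r$.  This is well defined because the orbits $W(\mu_\tau+\rho)$ corresponding to distinct highest weights are pairwise disjoint, and $u$ will automatically satisfy $u(\xi'_\tau)=v(\mu_\tau+\rho)=a(\xi'_\tau)$.

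For the interpolation step I would fix once and for all a $W$-invariant bump $\phi\in C_c^\infty(\ft^*)$ with $\phi(0)=1$ and support contained in a ball of radius smaller than half the minimal separation between distinct points of the (shifted) weight lattice of $K$, and set
\[
v(t)=\sum_{\tau\in\widehat K}a(\xi'_\tau)\sum_{w\in W}\phi\bigl(t-w(\mu_\tau+\rho)\bigr).
\]
At each $t\in\ft^*$ at most one $\tau$ and at most $|W|$ inner summands contribute, so $v$ is smooth, $W$-invariant, and satisfies $v\bigl(w(\mu_\tau+\rho)\bigr)=a(\xi'_\tau)$.  A direct calculation, based on the observation that only translates of $\phi$ centered within a bounded distance from $t$ contribute, yields
\[
\sup_{t\in\ft^*,\ |\al|\le N}(1+|t|)^N\bigl|\de^\al v(t)\bigr|\le C_N\sup_\tau\bigl(1+|\mu_\tau+\rho|\bigr)^N\bigl|a(\xi'_\tau)\bigr|,
\]
which by \eqref{xi'/tau} is in turn controlled by a sup of the form $\sup_\tau(1+|\xi'_\tau|)^{N/2}|a(\xi'_\tau)|$.

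To descend $v$ to $u\in\cS(\bR^r)$ with $u\circ P=v$ I would invoke G.~Schwarz's theorem on smooth invariants, which is especially clean in the present finite-reflection-group setting because by Chevalley's theorem $\cP(\ft^*)^W=\bC[p_1,\dots,p_r]$ is a polynomial algebra.  Combined with a smooth extension from $P(\ft^*)$ to all of $\bR^r$, this produces a linear section $\Theta:\cS(\ft^*)^W\to\cS(\bR^r)$ inverting pullback along $P$; the closed graph theorem for Fr\'echet spaces then forces $\Theta$ to be continuous, so $\|u\|_{(N)}\le C_N'\|v\|_{(M(N))}$ for some sequence $\{M(N)\}$, and composing with the interpolation estimate yields \eqref{a->g}.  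The hard part is precisely this Schwartz-class smooth extension from the semi-algebraic set $P(\ft^*)$: one must show that the section $\Theta$ can be taken to land in $\cS(\bR^r)$ and not merely in $C^\infty(\bR^r)$, an issue typically addressed via Mather-type preparation theorems or, more concretely for Weyl groups, via a Chevalley basis decomposition of $\cS(\ft^*)$ as a free $\cS(\bR^r)$-module of rank $|W|$.
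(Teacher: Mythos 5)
Your proposal is correct and follows essentially the same route as the paper: interpolate upstairs on $\ft^*$ by a $W$-invariant sum of translated bumps centered at the (orbits of the) shifted highest weights, then descend through $P=(p_1,\dots,p_r)$ via G.~Schwarz's theorem together with a continuous (Mather-type) linear extension operator adapted to the Schwartz class, exactly as the paper does by citing \cite{GS}, \cite{Mather} and the adaptation in \cite{ADR2}. The only cosmetic differences are that the paper first extends $a$ to a Weyl-invariant rapidly decreasing function on the full integral lattice before summing bumps, and quotes the continuity of Mather's operator directly rather than deriving it from the closed graph theorem.
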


\begin{proof}

We can set $a(\xi'_\tau)=\tilde a(\mu_\tau+\rho)$ and extend $\tilde a$ to a Weyl-invariant function on the integral lattice $\Lambda$ in $\ft^*$. Then also $\tilde a$ is rapidly decreasing. If $\eta\in C^\infty_c(\ft^*)$ is Weyl-invariant, equal to 1 at 0 and supported on a small neighborhood of 0 which does not contain any other element of $\Lambda$, then
$$
h(t)=\sum_{\la\in\Lambda}\tilde a(\la)\eta(t-\la)
\qquad\forall t\in \ft^*
$$
extends $\tilde a$, is Schwarz and Weyl-invariant, and the $N$-th Schwartz norm of $h$ is controlled by the right-hand side of \eqref{a->g} for some $M(N),A_N$.

 We apply G. Schwartz's extension of Whitney's theorem \cite{GS} and use the existence of an extension operator $\cE$ \cite{Mather}
 \footnote{See the adaptation to Schwartz functions in \cite{ADR2}.}. This means that there is $g=\cE h\in\cS(\bR^r)$ such that
 $$
 h(t)=u\big(p_1(t),\dots,p_r(t)\big)\ ,
 $$
 where the operator $\cE$ is linear and continuous, i.e., there exist $M'(N),A'_N$ such that
 $$
 \|u\|_{(N)}\le A'_N\|h\|_{(M'(N))}
 $$
 for every $N$.
This gives \eqref{a->g}. Moreover, 
\begin{equation*}
 a(\xi'_\tau)=\tilde a(\mu_\tau+\rho)=h(\mu_\tau+\rho)=g(\xi'_\tau)\ .
  \qedhere
\end{equation*}
\end{proof}

\begin{proof}[Proof of Theorem \ref{reduction}] 
Assume that property (S') holds for $(G,K)$ and 
fix $\tau$ in $\widehat K$. 

If $g$ is a Schwartz extension of $\cG f$ to $\bR^\ell$, then $g_\tau(\xi'')=g(\xi'_\tau,\xi'')$ is a Schwartz extension to $\bR^{\ell-r}$ and, for every $M, N$,
\be\label{decay}
\|g_\tau\|_{(N)}\le C_{M,N}\big(1+|\xi'_\tau|\big)^{-M}\ ,
\ee
and this gives (S''), in fact with the same $g^\tau$ independent of $M$.

Assume now that (S'') holds. By a diagonal argument, it is possible to produce a single sequence of functions $g_\tau=g^{N(\tau)}_\tau$ such that \eqref{decay} holds for every $M,N$. In fact  there exists an increasing sequence  $\{r_N\}$ such that, for $|\xi'_\tau|>r_N$, $\|g_\tau^N\|_{(N)}\le |\xi'_\tau|^{-N}$. We then choose
$$
N(\tau)=\begin{cases}0&\text{ if }|\xi'_\tau|\le r_0\\ N&\text{ if }r_N<|\xi'_\tau|\le r_{N+1}\ .\end{cases}
$$

Applying Lemma \ref{interpolation} to each function $a_{\xi''}(\xi'_\tau)=g_\tau(\xi'')$ on $\Sigma'_\cD$ and calling $u_{\xi''}$ the resulting Schwartz function on $\bR^r$, it is easy to verify that the function
$$
g(\xi)=u_{\xi''}(\xi')
$$
is Schwartz.
\end{proof}

\medskip

We conclude this section with the proof of property (S) for compact Gelfand pairs, i.e., with $G$ compact, which is based on a factorization of the space $\bR^\ell$ containing the spectrum $\Sigma$ similar to the one used for strong Gelfand pairs.
We refer to \cite[Ch. VI, Sect. 2.1]{H}  for notation and more details.

\begin{theorem}\label{compactGp}
Property (S) holds for compact Gelfand pairs.
\end{theorem}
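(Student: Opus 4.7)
The plan is to parallel the factorization strategy used for strong Gelfand pairs in Theorem \ref{reduction} and Lemma \ref{interpolation}, exploiting the fact that for compact $G$ one has $\cS(K\backslash G/K) = C^\infty(K\backslash G/K)$ and that the spectrum $\Sigma$ is discrete, indexed by the spherical dual $\widehat G^K \subset \widehat G$ of irreducible representations containing a nonzero $K$-fixed vector.

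The first step is to realize the embedding $\iota_\cD$ via a Harish-Chandra type isomorphism for the pair $(G,K)$: following Helgason Ch.~VI Sect.~2.1, there exist a subspace $\fa\subset\fg$, an associated Weyl group $W$, an element $\rho_\fa\in\fa^*$, and an algebra isomorphism $\gamma:\bD(G/K)\to\cP(\fa^*)^W$ such that each spherical function $\ph_\pi$ is labelled by a parameter $\la_\pi = \mu_\pi+\rho_\fa$ lying in a discrete subset $\Lambda\subset\fa^*$, with $\xi_{\ph_\pi}(D_j)=\gamma(D_j)(\la_\pi)=:p_j(\la_\pi)$. Setting $P=(p_1,\dots,p_\ell):\fa^*/W\to\bR^\ell$, we have $\iota_\cD(\ph_\pi)=P(\la_\pi)$, and, after possibly enlarging $\cD$ via Proposition \ref{D,tildeD} so that $|t|^2$ lies in the algebra generated by the $p_j$'s, polynomial bounds analogous to \eqref{xi'/tau} give
\[
C\,|\la_\pi|^2 \le 1+|\iota_\cD(\ph_\pi)| \le C'\,|\la_\pi|^d
\]
for some $d\ge 2$, so rapid decay in either parameter is equivalent.

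For the forward direction of property (S), I would fix $f\in C^\infty(K\backslash G/K)$ and iterate $\cG(Df)(\ph)=\xi_\ph(D)\,\cG f(\ph)$ for polynomials $D$ in $D_1,\dots,D_\ell$; using the uniform bound $\|Df\|_\infty<\infty$ inherited from compactness of $G$, the sequence $\cG f(\ph_\pi)$ decays faster than any polynomial in $|\iota_\cD(\ph_\pi)|$, and hence in $|\la_\pi|$. A Schwartz extension to $\bR^\ell$ is then produced exactly as in Lemma \ref{interpolation}: lift $\cG f$ to a $W$-invariant rapidly decreasing function on $\Lambda$, smear with a Weyl-symmetric bump supported on a small neighborhood of $0$ that meets no other lattice point to build $h\in\cS(\fa^*)^W$, and apply the G.~Schwartz / Mather continuous extension operator to obtain $u\in\cS(\bR^\ell)$ with $u\circ P=h$ on $\fa^*$; by construction $u|_{\Sigma_\cD}=\cG f$.

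For the converse direction, given $u\in\cS(\bR^\ell)$ the sequence $a_\pi=u(\iota_\cD(\ph_\pi))$ decays rapidly in $|\la_\pi|$, and the spherical series $f=\sum_{\pi\in\widehat G^K} d_\pi\,a_\pi\,\ph_\pi$ converges in every $C^m$-norm (using polynomial bounds on $d_\pi$ via the Weyl dimension formula and on $\|X^\al\ph_\pi\|_\infty$ via standard matrix coefficient estimates in $|\la_\pi|$) to a smooth bi-$K$-invariant function with $\cG f=a$. Bijectivity of $\cG:\cS(K\backslash G/K)\to\cS(\Sigma_\cD)$ combined with the open mapping theorem on Fr\'echet spaces then yields the desired topological isomorphism. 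The main technical subtlety will be ensuring that the Harish-Chandra isomorphism $\gamma$ is available in the required form for general compact Gelfand pairs and not only for compact symmetric pairs; once that is in place, the discreteness of the spectrum makes the interpolation step especially clean and the rest of the argument is a direct transplant of the strong-pair reduction.
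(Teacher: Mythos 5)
Your overall strategy --- exploit discreteness of the spectrum, obtain rapid decay of $\cG f(\ph_\pi)$ by iterating $\cG(Df)=\xi_\ph(D)\,\cG f$, then interpolate with a Weyl-invariant bump on a lattice and push forward by the G.~Schwartz/Mather extension operator --- is the same as the paper's. Note also that your converse direction is superfluous: continuity of $\cG\inv:\cS(\Sigma_\cD)\to\cS(K\backslash G/K)$ is already established for every pair of polynomial growth via Martini's theorem, and by Proposition \ref{S'} only the forward implication (S') requires proof, so the spherical-series construction and the matrix-coefficient estimates it relies on can be dropped.

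The genuine gap is in your first step. You ground the whole parametrization on a Harish-Chandra isomorphism $\gamma:\bD(G/K)\to\cP(\fa^*)^W$ attached to the \emph{pair}, with spherical functions labelled by points $\mu_\pi+\rho_\fa$ of a lattice in $\fa^*$ and eigenvalues $p_j(\mu_\pi+\rho_\fa)$. This is the structure theory of compact \emph{symmetric} spaces; for a general compact Gelfand pair there is no such subspace $\fa\subset\fg$, and $\bD(G/K)$, though finitely generated and commutative, need not be a free polynomial algebra of the form $\cP(\fa^*)^W$. You flag this yourself as ``the main technical subtlety'', but it is not a loose end to be checked --- it is the point where the argument must be replaced. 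The paper's resolution is to use the Harish-Chandra isomorphism for the \emph{group} $G$ instead, i.e.\ for the center $\bZ(G)$ of $\bD(G)$, with $\ft$ a maximal torus and $W$ the Weyl group of $G$: taking $D_1,\dots,D_r$ to be the projections to $G/K$ of generators of $\bZ(G)$ (completed arbitrarily to a generating system of $\bD(G/K)$), the first $r$ eigenvalues of $\ph_\pi$ are $p_j(\mu_\pi+\rho)$ with $\mu_\pi$ the highest weight of $\pi\in\widehat G_K$; since inequivalent irreducible representations of a compact connected group have distinct infinitesimal characters, these $r$ coordinates already separate the points of $\Sigma_\cD$, the estimate \eqref{xi'/tau} holds on the weight lattice of $G$, and Lemma \ref{interpolation} applies exactly as in the strong-pair reduction. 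With that substitution (and after adjusting $\cD$ via Proposition \ref{D,tildeD} so that one of the $p_j$ is $|t|^2$, as you do), the rest of your outline goes through.
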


\begin{proof}
Let $G$ be compact, $K\subset G$. Let $\widehat G_K$ the set of elements of $\widehat G$ of class 1 relative to $K$. By the Peter-Weyl theorem,
$$
L^2(G/K)=\sum_{\pi\in\widehat G_K}\cH_\pi(G/K)\ ,
$$
where, up to lifting functions to $G$, $\cH_\pi(G/K)\subset\cH_\pi(G)$ is the space of  right-$K$-invariant matrix entries of $\pi$. Then each $\cH_\pi(G/K)$ contains a single spherical function.

The algebra $\bD(G/K)$ contains the algebra $\cA=\bZ(G/K)$ of operators induced from the elements of the center $\bZ(G)$ of $\bD(G)$.  We can then choose a generating system $\cD$ of $\bD(G/K)$ by first selecting a generating system of $\bZ(G)$. Projecting these operators to $G/K$ we obtain a generating system $\{D_1,\dots,D_r\}$ of $\cA$, which can be completed, if necessary, to form a generating system $\{D_1,\dots,D_\ell\}$ of $\bD(G/K)$.

Since the (scalar) actions of $\bZ(G)$ on the various $\cH_\pi(G)$ are inequivalent, the eigenvalues under the action of $D_1,\dots,D_r$ are sufficient to identify spherical functions. This means that the projection of $\Sigma_\cD\subset\bR^\ell$ into $\bR^r$ is injective and consists of the points \eqref{xi'=p}  with $\tau\in \widehat G_K$, so that inequality \eqref{xi'/tau} holds. We can then exploit the arguments used in the first part of this section to obtain the required estimates.
\end{proof}

\bigskip

\section{A first step in the study of property {\rm (S)} for strong Gelfand pairs\\ with polynomial growth}

From \cite[Thm. 3]{Yak} or \cite[Thm. 15.1.6]{W} one derives the list of all strong Gelfand pairs $(G,K)$ which are indecomposable and maximal.\footnote{\cite{Yak} and \cite{W} do not mention strong Gelfand pairs explicitly. One must recall that   $(G,K)$ being a strong Gelfand pair is the same as saying that $(G\times K,{\rm diag}\,K)$ is a Gelfand pair.}  Indecomposable means that it cannot be expressed as $(G_1\times G_2,K_1\times K_2)$ with $K_1\subset G_1$, $K_2\subset G_2$.
Maximal means that $G$ is not obtained from another strong Gelfand pair $(\widetilde G,K)$ by central reduction, i.e.,  $G=\widetilde G/H$, where $H\subset Z(G)$, the center of $G$, and $(\widetilde G,K)$ is a strong Gelfand pair.

They are the following:

  \begin{table}[htp]
\begin{center}
\begin{tabular}{lcll}
Compact&\quad&$(SO_{n+1},SO_n)$&$(U_{n+1},U_1\times U_n)$\\
Polynomial growth, noncompact&\quad&$(SO_n\ltimes\bR^n,SO_n)$&$(U_n\ltimes H_n,U_n)$\\
Exponential growth&\quad&$(SO_{n,1},SO_n)$&$(U_{n,1},U_1\times U_n)$\\
\end{tabular}
\end{center}
\end{table}

 In view of Theorem \ref{compactGp}, we restrict ourselves to pairs that are  noncompact and of polynomial growth.  Besides the pairs in the second row, there are non-maximal ones that are also interesting, like 
 $$
 (U_n\ltimes \bC^n,U_n)\ ,\qquad (U_1^n\ltimes H_n,U_1^n)\ .
 $$

They are both obtained by central reduction, the first from $(U_n\ltimes H_n,U_n)$, factoring out the center of $H_n$, the second from the $n$-fold product of $(U_1\ltimes H_1,U_1)$, factoring out a hyperplane in the center of $(H_1)^n$.

Concerning property (S), we prove here the following theorem.

\begin{theorem}
 Let $(G,K)$ be a Gelfand pair with $G=K\ltimes H$ and $K$ abelian.  Then $(G,K)$ is automatically a strong Gelfand pair. Moreover, assuming that $G$ has polynomial growth, $(G,K)$ satisfies property (S) as  strong Gelfand pair if and only if it does so as  Gelfand pair.
 \end{theorem}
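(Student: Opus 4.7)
For the first assertion I plan to invoke condition (d) of Proposition \ref{eqstrong}: $(G,K)$ is strong iff each $L^1_\tau(G)^{{\rm Int}(K)}$ is commutative. Since $K$ is abelian, conjugation in $G = K \ltimes H$ by $(k_0, e)$ sends $(k, h)$ to $(k, k_0 \cdot h)$, so ${\rm Int}(K)$-invariance amounts to $K$-invariance in the $H$-variable. A short computation using \eqref{f->ftau} then shows that every $f_\tau \in L^1_\tau(G)^{{\rm Int}(K)}$ has the form $f_\tau(k, h) = \overline{\chi_\tau(k)}\, g(h)$ with $g \in L^1(H)^K$, and the semidirect-product convolution law, combined with $|\chi_\tau| \equiv 1$, yields the factored convolution $(f_1 * f_2)(k, h) = \overline{\chi_\tau(k)}\,(g_1 *_H g_2)(h)$. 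Commutativity of $L^1(H)^K$, granted by the Gelfand-pair hypothesis via Proposition \ref{KltimesH}(i'), then propagates to each $L^1_\tau(G)^{{\rm Int}(K)}$.

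For the (S)-equivalence I would choose generators $\{D_1, \ldots, D_r\}$ of $\fU(\fk)$ and $\{D_{r+1}, \ldots, D_\ell\}$ of $\fU(\fh)^{{\rm Ad}(K)} \cong \bD(G/K)$, which together generate $\bD(G)^{{\rm Int}(K)} \cong \fU(\fg)^{{\rm Ad}(K)}$ (using that ${\rm Ad}(K)$ acts trivially on $\fU(\fk)$ when $K$ is abelian). The key structural observation is that every spherical function of the strong pair of type $\tau$ has the product form $\ph(k, h) = \overline{\chi_\tau(k)}\, \psi(h)$ with $\psi$ a bounded spherical function of the underlying Gelfand pair, and the operators $D_{r+1}, \ldots, D_\ell$ differentiate only in the $H$-factor. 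Consequently $\Sigma^\tau_\cD = \Sigma^0_\cD$ in $\bR^{\ell-r}$ for every $\tau \in \widehat K$, and a direct computation yields the transform identity $\cG_\tau f_\tau(\xi'_\tau, \xi'') = \cG_0 g_\tau(\xi'')$ whenever $f_\tau(k, h) = \overline{\chi_\tau(k)}\, g_\tau(h)$, where $\cG_0$ denotes the spherical transform of the Gelfand pair.

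For the forward direction, assume (S) for $(G, K)$ as a Gelfand pair, so $\cG_0 : \cS(H)^K \to \cS(\Sigma^0_\cD)$ is a topological isomorphism by the open mapping theorem. I would then verify condition (S'') of Theorem \ref{reduction}. Given $f \in \cS(G)^{{\rm Int}(K)}$, the component $g_\tau(h) = \int_K f(k, h)\chi_\tau(k)\,dk$ is a partial Fourier coefficient of $f$ on the compact abelian factor $K$; integration by parts against elements of $\fU(\fk)$, combined with the polynomial comparison in \eqref{xi'/tau} between $|\xi'_\tau|$ and $|\mu_\tau|$, yields the rapid decay $\|g_\tau\|_{(M)} \le C_{M,N}(1 + |\xi'_\tau|)^{-N}$ for all $M, N$. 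Continuity of $\cG_0$ together with the definition of the quotient Schwartz topology on $\cS(\Sigma^0_\cD)$ then produces, for each $N$ and $\tau$, a Schwartz extension $g_\tau^N \in \cS(\bR^{\ell-r})$ of $\cG_\tau f_\tau = \cG_0 g_\tau$ with $\|g_\tau^N\|_{(N)} \le C'_N\,\|g_\tau\|_{(M(N))}$, which is rapidly decaying in $\tau$. This is (S''), and Theorem \ref{reduction} delivers (S) for the strong pair.

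The converse is essentially a slicing argument. Given $g \in \cS(H)^K = \cS(K \backslash G/K)$, the function $f(k, h) = g(h)$ lies in $\cS_{\tau_0}(G)^{{\rm Int}(K)}$, and $\cG f$ is supported on $\Sigma^{\tau_0}$, where it equals $\cG_0 g$ under the identification above. A Schwartz extension $F \in \cS(\bR^\ell)$ of $\cG f$ produced by (S) for the strong pair gives, after slicing at $\xi' = \xi'_{\tau_0}$, a Schwartz extension $\xi'' \mapsto F(\xi'_{\tau_0}, \xi'')$ of $\cG_0 g$ on $\bR^{\ell-r}$, hence (S') for the Gelfand pair and thus (S) by Proposition \ref{S'}. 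I expect the real obstacle to be concentrated in the forward direction: the rapid-decay-in-$\tau$ rests on elementary Fourier estimates on $K$, but packaging them together with the continuity of $\cG_0$ into the $\tau$-uniform extension estimates required by (S'') is where the argument has to be executed carefully.
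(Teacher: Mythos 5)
Your proposal is correct and follows essentially the same route as the paper: decompose a $K$-central function into $K$-types, observe that for abelian $K$ each type-$\tau$ component factors as a character of $K$ times a $K$-invariant function on $H$, deduce $L^1_\tau(G)^{{\rm Int}(K)}\cong L^1(H)^K$ (hence the strong Gelfand property and $\Sigma_\cD\cong\widehat K\times\Sigma^0_\cD$), and then verify (S$''$) by combining the rapid decay of the $K$-Fourier coefficients of a Schwartz function with the continuity of $\cG_0:\cS(H)^K\to\cS(\Sigma^0_\cD)$ granted by the open mapping theorem. The paper phrases the easy converse as being ``implicit in (S$''$)'' where you spell out the slicing at $\xi'=\xi'_{\tau_0}$, but this is only a difference in level of detail, not of method.
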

 
 We recall the difference between the two kinds of property (S) for $(G,K)$: as strong Gelfang pair, it concerns Schwartz extendability of $\cG f$, with $f$ $K$-central on $G$, from $\Sigma_\cD$ to $\bR^\ell$, while, as ordinary Gelfand pair, it concerns extendability of $\cG_0 f$, with $f$ bi-$K$-invariant on $G$, from $\Sigma^{\tau_0}_\cD$ to $\bR^{\ell-r}$.
 
 \begin{proof}
We denote the elements of $K$ as $k_\theta$ with $\theta \in\bT^r$ and choose $D_j$, $j\le r$, as $D_jf(x)=-i\de_{\theta_j}f(xk_\theta)_{|_{\theta=0}}$. We then have $\Sigma'_\cD=\bZ^r(\cong \widehat K)$. 
 Writing the product on $K\ltimes H=G$ as in~\eqref{product},
 for each $m\in\bZ^r$, $L^1_m(G)^{{\rm Int}(K)}$ consists of the  functions $f$ such that, for all $x\in G$ and $\theta\in\bT^r$,
 \be\label{tau->0}
  f(k_\theta,h)= f_m(h)e^{im\cdot\theta}\ ,
\ee
with $f_m$ $K$-invariant on $H$.

We also have, for $f,g\in L^1_m(G)^{{\rm Int}(K)}$,
$$
(f*_Gg)(k_\theta,h)=( f_m*_H g_m)(h)e^{im\cdot\theta}\ ,
$$
so that
$$
L^1_m(G)^{{\rm Int}(K)}\cong L^1_0(G)^{{\rm Int}(K)}\cong L^1(H)^K
$$ as convolution algebras. This implies the first part of the statement. 

We also have the equality $\Sigma^m_\cD=\Sigma^0_\cD$ in $\bR^{\ell-r}$ and therefore the spectrum of the strong Gelfand pair is $\Sigma_\cD=\bZ^r\times\Sigma^0_\cD$.\footnote{Observe that this identity may not hold if $G$ is not a semidirect product $K\ltimes H$. Take for instance $(G,K)=(SO_3,SO_2)$ or $=(SO_{2,1},SO_2)$.}

Passing to the second part, one implication is implicit in condition (S''). For the other implication, we use Theorem \ref{reduction} as follows.

Assume that property (S) is satisfied by $(G,K)$ as a Gelfand pair and take $f\in \cS(G)^{{\rm Int}(K)}$. Then
$$
f(k_\theta,h)=\sum_{m\in\bZ^r} f_m(h)e^{im\cdot\theta}\ ,
$$
where the functions $ f_m$ are in $\cS(H)^K$ with each Schwartz norm rapidly decreasing in $m$.

By Proposition \ref{KltimesH}, we can state property (S) as a Gelfand pair  as continuity for the map $\cG_0:\cS(H)^K\longrightarrow \cS(\Sigma_\cD^0)$. Explicitly, this means that, for every $N$, there exist an integer $M=M(N)$ and a constant $C_N$ such that, for every $f\in \cS(H)^K$, $\cG_0 f$ admits a Schwartz extension $g_N$  with $\|g_n\|_{(N)}\le C_N\|f\|_{(M)}$.

Applying this to each $ f_m$, we obtain condition (S'').
\end{proof}

 In particular we have
\begin{corollary}
The pairs $(U_1\ltimes \bC,U_1)$, $(U_1^n\ltimes H_n, U_1^n)$ are strong Gelfand pairs satisfying property (S).
\end{corollary}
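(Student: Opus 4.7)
The plan is to invoke the preceding Theorem directly. Both $K=U_1$ and $K=U_1^n$ are compact abelian Lie groups, so the hypothesis of the Theorem is satisfied for $(U_1\ltimes \bC,U_1)$ and $(U_1^n\ltimes H_n,U_1^n)$. The Theorem then tells us two things simultaneously: each pair is automatically a strong Gelfand pair, and for each of them property (S) as a strong Gelfand pair is equivalent to property (S) as an ordinary Gelfand pair. The whole task is thereby reduced to verifying ordinary-Gelfand-pair property (S) for the two semidirect products.

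Next I would observe that both pairs are nilpotent Gelfand pairs: in the first case $H=\bC\cong\bR^2$ is abelian, in the second case $H=H_n$ is the Heisenberg group. They therefore belong to the class of nilpotent pairs for which property (S) is already known through the results cited in the Introduction. For $(U_1\ltimes\bC,U_1)$ the verification is the classical statement that a radial Schwartz function on $\bR^2$ has a radial Schwartz Fourier transform; choosing as generator of $\bD(\bR^2)^{U_1}$ the Laplacian, so that $\Sigma^{\tau_0}_\cD=[0,\infty)\subset\bR$, the spherical transform of such a function factors through $\xi\mapsto|\xi|^2$ as an even Schwartz function of one variable, which admits a Schwartz extension to the whole of $\bR$ by the same Seeley--Whitney type extension used in Lemma \ref{interpolation}.

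The only step that requires genuine care is matching the second pair $(U_1^n\ltimes H_n,U_1^n)$ against the available literature on nilpotent Gelfand pairs. If an explicit reference is not directly available, I would realise this pair as a central reduction of the $n$-fold direct product $\prod_{i=1}^n(U_1\ltimes H_1,U_1)$, obtained by quotienting $H_1^n$ by a hyperplane in its center, exactly as noted in the paragraph preceding the Theorem. Property (S) for $(U_1\ltimes H_1,U_1)$ is among the most classical examples in the nilpotent-pairs theory, and property (S) for a direct product of Gelfand pairs reduces to property (S) for each factor. The transfer across the central reduction amounts to setting to zero the spectral variables dual to the quotiented central directions; this is the step where I would expect to spend the most effort, though no serious difficulty is anticipated, since the spherical transform on the quotient is precisely the restriction of the spherical transform on the cover to the corresponding slice of spectral parameters.
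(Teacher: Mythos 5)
Your first step coincides exactly with the paper's proof: the corollary is deduced by applying the preceding theorem to reduce property (S) as a strong Gelfand pair to property (S) as an ordinary Gelfand pair, and then appealing to the known validity of the latter for these nilpotent pairs. Where you diverge is in how that latter validity is established. The paper disposes of both pairs with a one-line citation to \cite{ADR1,ADR2}: the pair $(U_1^n\ltimes H_n,U_1^n)$ is precisely the ``polyradial'' case treated directly in \cite{ADR1}, and $(U_1\ltimes\bC,U_1)$ is covered by \cite{ADR2} (and by the classical even-Whitney argument for radial Schwartz functions on $\bR^2$ that you sketch, which is fine). Your fallback for the second pair --- realizing it as a central reduction of $\prod_{i=1}^n(U_1\ltimes H_1,U_1)$ and transferring property (S) across the reduction --- is workable but is the harder road, and your description of the transfer is slightly off: the spherical transform on the quotient is not literally a restriction of the spherical transform on the cover, because a Schwartz function on $H_n$ does not lift to a Schwartz function on $(H_1)^n$ (it has no decay along the quotiented central hyperplane). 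One must instead realize each $K$-invariant Schwartz function on $H_n$ as the fiber integral (push-forward along that hyperplane) of a $K$-invariant Schwartz function upstairs; it is the transform of \emph{that} function whose restriction to the appropriate spectral slice gives the desired extension. This construction does go through here because $U_1^n$ acts trivially on the center, but systematic transfer of property (S) along central reductions is exactly the nontrivial machinery of \cite{FRY2}, so the direct citation of \cite{ADR1} is both what the paper does and the economical choice.
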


\begin{proof} Property (S) as Gelfand pairs has been proved for these pairs in \cite{ADR1,ADR2}.
\end{proof}

\medskip

It is clear that matters become more complicated if $K$ is not abelian. 
Though the group $G$ remains of the form $K\ltimes H$ (with $H$ equal to $\bR^n$, $\bC^n$ or a Heisenberg group $H_n$), the representations $\tau\in\widehat K$ mostly have dimension higher than 1. The hard points in our paradigm are the following two aspects of condition (S''), i.e.,
\begin{itemize}
\item for every $\tau$, $\cG^\tau_\cD$ maps $\cS(G)_\tau^{{\rm Int}(G)}$ into $\cS(\Sigma^\tau_\cD)$,
\item the rapid decay in $\tau$ prescribed by (S'') can be obtained.
\end{itemize}

This is done in \cite{ADR3} for the pair $(U_2\ltimes\bC^2,U_2)$.

\bigskip

\bigskip


\begin{thebibliography}{99}

\newcommand\auth[1]{{\textrm{#1}}}
\newcommand\papertitle[1]{{\textrm {#1}}}
\newcommand\jourtit[1]{{\textit{\frenchspacing#1}}}

\newcommand\journum[1]{{\textbf{#1}}}
\newcommand\booktit[1]{{\textit{#1}}}


		\bibitem{ADR1}
		\auth{F.~Astengo, B.~Di Blasio, F.~Ricci},
		\papertitle{Gelfand transforms of polyradial Schwartz functions on the Heisenberg group},
		\jourtit{J. Funct. Anal.}, \journum{251}\,(2007), 772--791. 

		\bibitem{ADR2}
		\auth{F.~Astengo, B.~Di Blasio, F.~Ricci},
		\papertitle{Gelfand pairs on the Heisenberg group and Schwartz functions},
		\jourtit{J. Funct. Anal.}, \journum{256}\,(2009), 1565--1587.
 
		\bibitem{ADR3}
		\auth{F.~Astengo, B.~Di Blasio, F.~Ricci},
		\papertitle{in preparation}.
	
\bibitem{BJR1}
		 \auth{C. Benson, J. Jenkins, G. Ratcliff},
		\papertitle{On Gelfand pairs associated with solvable Lie groups},
		\jourtit{Trans. Amer. Math. Soc.}, \journum{321}\,(1990), 85--116.

 \bibitem{BJR2}
		\auth{C. Benson, J. Jenkins, G. Ratcliff},
		\papertitle{The spherical transform of a Schwartz function on the Heisenberg group},
		\jourtit{J. Funct. Anal.}, \journum{154}\,(1998), 379--423.


 \bibitem{BR}
		\auth{C. Benson, G. Ratcliff},
		\papertitle{A classification of multiplicity free actions},
		\jourtit{J. Alg.}, \journum{181}\,(1996), 152--186.

\bibitem{Camp}
		\auth{R. Camporesi},
		\papertitle{The spherical transform for homogeneous vector bundles over Riemannian symmetric spaces},
		\jourtit{J. Lie Th.}, \journum{7}\,(1997), 29--60.
	
		\bibitem{DS}
		\auth{R. D\'iaz Mart\'in, L. Saal},
		\papertitle{Matrix spherical analysis on nilmanifolds},
		\jourtit{Transform. Groups}, \journum{24}\,(2019), 887--911.
     
		
	\bibitem{Far}
		\auth{J. Faraut},
		\papertitle{Analyse harmonique sur les paires de Guelfand et les \'espaces hyperboliques},  in Analyse harmonique, CIMPA, Nice 1983.

		\bibitem{FR}
		\auth{F.~Ferrari Ruffino},
		\papertitle{The topology of the spectrum for Gelfand pairs on Lie groups},
		\jourtit{Boll. Unione Mat. Ital. B}, \journum{10}\,(2007), 569--579.
     
	\bibitem{FiR}
		\auth{V. Fischer, F. Ricci},
		\papertitle{Gelfand transforms of $SO(3)$-invariant Schwartz functions on the free group $N_{3,2}$},
		\jourtit{Ann. Inst. Four. Gren.}, \journum{59}\,(2009), 20143--2168.
     
		\bibitem{FRY1}
		\auth{V. Fischer, F. Ricci, O. Yakimova},
		\papertitle{Nilpotent Gelfand pairs and spherical transforms of Schwartz functions I: rank-one actions on the centre},
		\jourtit{Math. Z.}, \journum{271}\,(2012), 221--255.
     
	\bibitem{FRY2}
		\auth{V. Fischer, F. Ricci, O. Yakimova},
		\papertitle{Nilpotent Gelfand pairs and Schwartz extensions of spherical transforms via quotient pairs},
		\jourtit{J. Funct. Anal.}, \journum{274}\,(2018), 1076--1128.
   
\bibitem{H2}
		\auth{S. Helgason},
		\booktit{\it Groups and Geometric Analysis}, Academic Press, Orlando, Florida, 1984.


\bibitem{H}
		\auth{S. Helgason},
		\booktit{\it Geometric Analysis on Symmetric Spaces}, Mathematical Surveys and Monographs 39, AMS, 1991.

		\bibitem{L}
		\auth{J.~Ludwig},
		\papertitle{A class of symmetric and a class of Wiener group algebras},
		\jourtit{J. Funct. Anal.}, \journum{31}\,(1979), 87--194.
     
	\bibitem{HulR}
		\auth{A. Hulanicki, F. Ricci},
		\papertitle{A Tauberian Theorem and Tangential Convergence for Bounded Harmonic Functions on Balls in $\bC^n$},
		\jourtit{Inv. Math.}, \journum{62}\,(1980), 325--331.


	\bibitem{Mather}
	 \auth{J.~N.~Mather,}
	\papertitle{Differentiable invariants},
	\jourtit{Topology},
	\journum{16}
	(1977), 145--155.
     
     \bibitem{Mar1}
  \auth A. Martini     
\papertitle{Algebras of differential operators on Lie groups and spectral multipliers}, Ph.D. Thesis, Scuola Norm. Sup. Pisa, 2010, arXiv:1007. 1119v1[math.FA]. 
     
\bibitem{Mar2}
\auth A. Martini     
\papertitle{Analysis of joint spectral multipliers on Lie groups of polynomial growth}
\jourtit{Ann. Inst. Fourier Gren.},
	\journum{62}
	(2012), 1215--1263.
     
     

\bibitem{N}
		\auth{M.~A.~Naimark},
		\booktit{\it Normed Rings}, 
		Wolters-Noordhoff Publishing, Groningen, 1970.
		

\bibitem{Ne}
\auth {E. Nelson, W. Steinspring}    
\papertitle{Representation of elliptic operators in an enveloping algebra}
\jourtit{Amer. J. Math.},
	\journum{81}\, (1959), 547--560.
	
		\bibitem{RS}
		\auth{F.~Ricci, A.~Samanta},
		\papertitle{Spherical analysis on homogeneous vector bundles},
		\jourtit{Adv. Math.}, \journum{338}\,(2018), 953--990.
     

		
		\bibitem{GS}
\auth{G.~W.~Schwarz,}
	\papertitle{Smooth functions invariant under the action of a compact Lie group},
	\jourtit{Topology},
	\journum{14}
	(1975), 63--68.
	
	\bibitem{T}
		\auth{A. Tirao},
		\papertitle{Spherical functions},
		\jourtit{Rev. Union Mat. Arg.}, \journum{28}\,(1977)\, 75--98.

	\bibitem{Tr}
		\auth{F. Tr\`eves},
		\booktit{\it Topological Vector Spaces, Distributions and Kernels}, Academic Press, New York, 1967.
		
\bibitem{vanD}
		\auth{G. vanDijk},
		\booktit{\it Introduction to harmonic analysis and generalized Gelfand pairs}, De Gruyter Studies in Mathematics,  Berlin, 2009.
		

\bibitem{V}
		\auth{V. S. Varadarajan},
		\booktit{\it Lie groups, Lie algebras, and their representations}, Prentice-Hall Series in Modern Analysis, Englewood Cliffs, N.J., 1974.
		
		
\bibitem{VSC}	
 \auth{N.~T.~Varopoulos, L.~Saloff-Coste, T.~Coulhon}, 
\booktit{Analysis and Geometry on Groups}, Cambridge Tracts in Math.,
vol. 100, Cambridge University Press, Cambridge, 1992.
		
\bibitem{Vin}
		\auth{E. Vinberg},
		\papertitle{Commutative homogeneous spaces and co-isotropic symplectic actions},
		\jourtit{Russian Math. Surveys}, \journum{56}\,(2001)\, 1-60.
     


		

\bibitem{War}
		\auth{G. Warner},
		\booktit{\it Harmonic analysis on semi-simple Lie groups. I, II}, Die Grundlehren der Mathematischen Wissenschaften, Band 188, 189. Springer-Verlag, New York-Heidelberg, 1972.

		

\bibitem{W}
		\auth{J. Wolf},
		\booktit{\it Harmonic Analysis on Commutative Spaces}, Mathematical Surveys and Monographs 142, AMS, 2007.
		

		\bibitem{Yak}
		\auth{O. Yakimova},
		\papertitle{Principal Gelfand pairs},
		\jourtit{Transform. Groups}, \journum{11}\,(2006),305--335 .
 		

\end{thebibliography}
\end{document}